\newtheorem{Thm}{Theorem}[section]
\newtheorem{Lem}[Thm]{Lemma}
\newtheorem{Prop}[Thm]{Proposition}
\theoremstyle{definition}
\newtheorem{Ass}[Thm]{Assumption}
\newtheorem{Def}[Thm]{Definition}
\newtheorem{Rem}[Thm]{Remark}
\newtheorem{Exa}[Thm]{Example}
\newcommand{\Fix}{\textup{fix}}
\newcommand{\R}{\mathbb{R}}
\begin{document}

\title[]{Some results for conjugate equations} 
\author{Kazuki Okamura}
\address{School of General Education, Shinshu University, 3-1-1, Asahi, Matsumoto, Nagano, JAPAN. TEL: +81-263-37-3066} 
\email{kazukio@shinshu-u.ac.jp} 
\keywords{Conjugate equations, de Rham's functional equations, iterated function systems}
\subjclass[2010]{39B72, 39B12, 28A80, 26A30}

\maketitle

\begin{abstract}
In this paper we consider a class of conjugate equations, which generalizes de Rham's functional equations. 
We give sufficient conditions for existence and uniqueness of solutions under two different series of assumptions.   
We consider regularity of solutions. 
In our framework, two iterated function systems are associated with a series of conjugate equations. 
We state local regularity by using the invariant measures of the two iterated function systems with a common probability vector. 
We give several examples, especially an example such that infinitely many solutions exists, and a new class of fractal functions on the two-dimensional standard Sierpi\'nski gasket which are not harmonic functions or fractal interpolation functions. 
We also consider a certain kind of stability.   
\end{abstract}

\setcounter{tocdepth}{1}
\tableofcontents

\section{Introduction}

In this paper we consider the following functional equation. 
Let $X$ and $Y$ be non-empty sets. 
Let $I$ be a finite set. 
Assume that for $i \in I$, a subset $X_i \subset X$, and two maps $f_i : X_i \to X$ and $g_i : X \times Y \to Y$ are given.   
Now consider the solution $\varphi : X \to Y$ satisfying that 
\begin{equation}\label{conjugate} 
\varphi (f_i (x)) = g_i (x, \varphi(x)), \ x \in X_i, i \in I. 
\end{equation} 

The functional equation above is a generalization of de Rham's functional equation \cite{dR57} and  in the framework of iterative functional equations (cf. Kuczma-Choczewski-Ger \cite{KCG90}).  
\cite{KCG90} focuses on single equations, here we study not single but plural equations in a system.  
Here we consider solutions satisfying a series of plural equations simultaneously. 
This means that the set $I$ above contains at least two points. 
\cite{dR57} deals with the case that $X = [0,1]$ and $I = \{0,1\}$ and $g_i : Y \to Y$.  
De Rham's functional equation driven by affine functions and related functions such as Takagi functions have been considered in many papers. 
A few of related results are Hata \cite{Ha85}, Zdun \cite{Z01}, Girgensohn-Kairies-Zhang \cite{GKZ06}, Serpa-Buescu \cite{SB15a, SB15b, SB15c}, Shi-Yilei \cite{ST16},  Barany-Kiss-Kolossvary \cite{BKK18}, Allaart \cite{A}, etc. 
Here we do not give a detailed review of this topic. 
Recently, Serpa-Buescu \cite{SB17} considered \eqref{conjugate} and gave necessary conditions for existence of the solution of \eqref{conjugate}.
In the case that $X$ and $Y$ are metric spaces, they also gave sufficient conditions for existence and uniqueness. 
They also gave explicit formulae for the solution. 

This paper has three purposes. 
The first one is to consider sufficient conditions for existence and uniqueness of the solution of \eqref{conjugate}.
The second one is investigating regularity properties for the solution. 
The final one is considering a kind of stability of  the solution. 
 
This paper is organized as follows. 
In Section 2, we consider two different series of sufficient conditions for existence of the solution of \eqref{conjugate}, which are stated in Theorems \ref{exist-unique} and \ref{non-injective}. 
These results are similar to \cite[Theorem 1]{SB17}, however, in Theorem \ref{exist-unique}, we remove several assumptions of \cite[Theorem 1]{SB17}, and furthermore, in Theorem \ref{non-injective} we deal with the case that $f_i$ is {\it not} injective. 

In Section 3, we consider local regularity of solutions via invariant measures of iterated function systems. 
We focus on the case that for each $i \in I$, $X_i = X$ and the value of $g_i (x, y)$ does not depend on $x$ and furthermore $(X, \{f_i\}_{i \in I})$ and $(Y, \{g_i\}_{i \in I})$ are iterated function systems satisfying the open set conditions.
Informally speaking, we state in Theorem \ref{Thm-dim-Haus-2} that under certain conditions the solution of \eqref{conjugate} is fractal, or in another phrase, singular. 
The solution of \eqref{conjugate} measures how ``far" two iterated function systems $(X, \{f_i\}_{i \in I})$ and $(Y, \{g_i\}_{i \in I})$ are.
Although we do not need to introduce measures for the definition of \eqref{conjugate}, 
we state Theorem \ref{Thm-dim-Haus-2} by using integrals of certain functions with respect to the invariant measures of iterated function systems $(X, \{f_i\}_{i \in I})$ and $(Y, \{g_i\}_{i \in I})$ equipped with a common probability weight. 
We emphasize that Theorem \ref{Thm-dim-Haus-2} is applicable to the case that $f_i$ and $g_i$ are {\it non-affine} functions. 
Theorem \ref{Thm-dim-Haus-2} generalizes a modified statement of \cite[Theorem 1]{O16} and  is also related to \cite[Theorems 7.3 and 7.5]{Ha85} and \cite[Theorems 6 and 7]{Z01}. 
\cite{Ha85, Z01, O16} deal with the case that $X = [0,1]$, however, our result is also applicable to the case that $X$ is {\it not} $[0,1]$. 
We deal with the case that $X$ is the two-dimensional standard Sierpi\'nski gasket. 

In Sections 4 and 5, we give several examples.  
In Proposition \ref{interval-exist-con}, we consider the case that $f_i$ is {\it not} injective by applying Theorem \ref{non-injective}. 
In Example \ref{infinite-sol}, we give an example for the case that {\it infinitely many} solutions exist.
Example \ref{exa-overlap} deals with the case that the iterated function systems $(X, \{f_i\}_{i \in I})$ and $(Y, \{g_i\}_{i \in I})$ have overlaps. 
In Example \ref{exa-SG}, we give an example for the case that $X$ is the two-dimensional standard Sierpi\'nski gasket and $Y = [0,1]$. 
The solution is different from fractal interpolation functions on the Sierpi\'nski gasket considered by Celik-Kocak-Ozdemir \cite{CKO08}, Ruan \cite{R10} and Ri-Ruan  \cite{RR11}.  
The solution is not a harmonic function on the two-dimensional Sierpi\'nski gasket, and we would be able to call the solution a ``fractal function on a fractal". 

In Section 6, we consider a certain kind of stability of the solution. 
A little more specifically, we justify the following intuition: 
If two systems of \eqref{conjugate} are ``close" to each other, then, the two solutions of these systems are ``close" to each other.  
Since we do not put any algebraic structures on $X$ or $Y$, we cannot consider the Hyers-Ulam stability.  
We consider an alternative candidate of stability, by using the notion of {\it Gromov-Hausdorff convergence} on the class of metric spaces.
Finally in Section 7, we state four open problems concerning conjugate equations.  


\section{Existence and uniqueness}

Let $I$ be a finite set containing at least two distinct points. Let $I = \{0,1, \dots, N-1\}$. 
Let $X$ be non-empty sets and $(Y, d_Y)$ be a complete metric space. 
Assume that for each $i \in I$ a map $g_i : X_i \times Y \to Y$ is given.   

\begin{Ass}\label{ass-f}
For each $i \in I$, 
let $X_i \subset X$ and $f_i : X_i \to X$ be a map such that 
\[ X = \bigcup_{i \in I} f_i (X_i).\]  
\end{Ass}

Henceforth, if we do not refer to $X_i$, then, we always assume that $X_i = X$.   

Let $A$ be the set of contact points, that is, 
\[ A_i := \bigcup_{j \in I \setminus \{i\}} \bigcup_{x_j \in X_j} \{x_i \in X_i \mid f_i (x_i) = f_j (x_j) \} = \bigcup_{j \in I \setminus \{i\}} f_i^{-1}(f_j (X_j)), \] 
\[ A := \bigcup_{i \in I} A_i = \bigcup_{i \ne j} f_i^{-1}(f_j (X_j)). \]
Let 
\[ \widetilde A := A \cup \bigcup_{n \ge 1} \bigcup_{i_1, \dots, i_n \in I} f_{i_1} \circ \cdots \circ f_{i_n} (A). \]

\begin{Ass}\label{ass-new-compat}
Assume that there exists a {\it unique} bounded map $\varphi_0 : A \to Y$ 
such that \\
(i) \[ g_i (x_i, \varphi_0 (x_i)) = g_j (x_j, \varphi_0 (x_j)) \]  
holds for every $x_i \in A_i$ and $x_j \in A_j$ satisfying that $f_i (x_i) = f_j (x_j)$.\\
(ii)  If $f_{i_1} \circ \cdots \circ f_{i_n}(x) \in A$, then, 
\[ \varphi_0 (f_{i_1} \circ \cdots \circ f_{i_n}(x)) = g_{i_1}\left(f_{i_2} \circ \cdots \circ f_{i_n}(x), \cdot \right) \circ \cdots \circ g_{i_n}(x, \cdot) \circ \varphi_0 (x).  \]
\end{Ass}

We say that a function $\psi$ on $\R$ is increasing (resp. decreasing) if $\psi(t_1) \le \psi(t_2)$ (resp. $\psi(t_1) \ge \psi(t_2)$) whenever $t_1 \le t_2$, and, is strictly increasing (resp. strictly decreasing) if $\psi(t_1) < \psi(t_2)$ (resp. $\psi(t_1) > \psi(t_2)$) whenever $t_1 < t_2$. 

Let $(M, d)$ be a metric space. 
We say that $T : M \to M$ is {\it $\psi$-contractive} in the sense of Matkowski \cite{Ma75}
if $\psi : [0, +\infty) \to [0, +\infty)$ is an increasing function such that for any $t > 0$, 
\begin{equation}\label{n-iterate} 
\lim_{n \to \infty} \psi^n (t) = 0. 
\end{equation} 
and  
\[ d(Tx, Ty) \le \psi\left(d(x,y)\right), \ \textup{ for any $x, y \in M$}. \]
We say that $T$ is $\psi$-contractive in the sense of Browder (See Jachymski \cite{J97} for details) if we can take $\psi$ in the above as a {\it strictly} increasing function. 

Hereafter, if we say that a map is a weak contraction in the sense of Matkowski or Browder, 
then, we mean that for some $\psi$ the map is $\psi$-contractive in the sense of Matkowski or Browder, respectively.  
We remark that if \eqref{n-iterate} holds for an increasing function $\psi$, then, $\psi(t) < t$ for any $t > 0$.

\begin{Ass}\label{ass-g-wc}
For each $i \in I$ and $x \in X_i$, $g_i(x, \cdot)$ is $\phi_i$-contractive in the sense of Matkowski, where $\phi_i : [0, +\infty) \to [0, +\infty)$ is a map satisfying \eqref{n-iterate}.  
\end{Ass}

If $g_i(x, \cdot)$ does not depend on the choice of $x$, specifically, $g_i(x_1, y) = g_i(x_2, y)$ holds for any $x_1, x_2 \in X_i$ and $y \in Y$, 
then, we simply write $g_i (\cdot) = g_i (x, \cdot)$. 

\subsection{Result for the case that each $g_i$ depends on $x$} 

First we deal with the case that each $g_i$ depends on $x$. 

\begin{Ass}\label{ass-A} 
Assume that each $f_i$ is injective and
\begin{equation}\label{inv-A} 
\bigcup_{i \in I} f_i^{-1}(\widetilde A) \subset \widetilde A.  
\end{equation} 
\end{Ass}

\eqref{inv-A} is satisfied if $x \in A$ whenever $f_i (x) \in A$ for some $i \in I$, or $A = \emptyset$ , for example.

\begin{Thm}\label{exist-unique}
Under Assumptions \ref{ass-f}, \ref{ass-new-compat}, \ref{ass-g-wc} and \ref{ass-A},  
there exists a unique bounded map $\varphi : X \to Y$ 
such that $\varphi = \varphi_0$ on $A$, and \eqref{conjugate} holds for every $i \in I$ and $x \in X_i$. 
Here the boundedness of $\varphi$ means that $\textup{Image}(\varphi)$ is contained in a ball on $Y$, specifically,
\[ \sup_{x_1, x_2 \in X} d_Y (\varphi(x_1), \varphi(x_2)) < +\infty.  \] 
\end{Thm}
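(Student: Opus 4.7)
The plan is to exhibit $\varphi$ as the unique fixed point of a natural operator $T$ acting on a complete metric space of maps $X\to Y$ that extend $\varphi_0$, via Matkowski's fixed point theorem. The driving observation is that \eqref{conjugate} uniquely determines $\varphi(y)$ for $y=f_i(x)$ once $\varphi(x)$ is known, so the operator $(T\varphi)(y):=g_i(x,\varphi(x))$ is automatically $\Phi$-contractive in the supremum metric with $\Phi:=\max_{i\in I}\phi_i$.

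First, I would extend $\varphi_0$ canonically from $A$ to $\widetilde{A}$. For $z = f_{i_1}\circ\cdots\circ f_{i_n}(a)\in\widetilde{A}$ with $a\in A$, \eqref{conjugate} forces
\[ \widetilde{\varphi}_0(z) := g_{i_1}(f_{i_2}\circ\cdots\circ f_{i_n}(a),\cdot)\circ\cdots\circ g_{i_n}(a,\cdot)(\varphi_0(a)). \]
Well-definedness (independence from the particular decomposition of $z$) follows from the injectivity of each $f_i$, from items (i)--(ii) of Assumption \ref{ass-new-compat}, and from the backward-invariance \eqref{inv-A}. Now fix a base point $y_0 \in Y$, say $y_0 := \varphi_0(a_0)$ for some $a_0 \in A$ (an arbitrary point of $Y$ if $A=\emptyset$), and set
\[ \mathcal{F} := \{\varphi : X \to Y : \varphi|_{\widetilde{A}} = \widetilde{\varphi}_0,\ \sup_{x\in X} d_Y(\varphi(x),y_0) < \infty\} \]
with the supremum distance $d_\infty(\varphi,\psi) := \sup_{x\in X} d_Y(\varphi(x),\psi(x))$; this is a complete metric space because $Y$ is complete. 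Define $T:\mathcal{F}\to\mathcal{F}$ by $(T\varphi)(y) := g_i(x,\varphi(x))$ for any representation $y = f_i(x)$. Consistency across representations uses Assumption \ref{ass-new-compat}(i), together with the fact that if $y$ lies in two different images $f_i(X_i)$ and $f_j(X_j)$ then both preimages necessarily lie in $A$, where $\varphi$ is pinned to $\varphi_0$. The identity $(T\varphi)|_{\widetilde{A}} = \widetilde{\varphi}_0$ holds because \eqref{inv-A} guarantees that a preimage of any point of $\widetilde{A}$ is again in $\widetilde{A}$, so $\varphi$ is determined on every input needed to evaluate $T\varphi$ on $\widetilde{A}$.

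The contraction step is then immediate: for $y = f_i(x)$,
\[ d_Y((T\varphi)(y),(T\psi)(y)) \le \phi_i(d_Y(\varphi(x),\psi(x))) \le \Phi(d_\infty(\varphi,\psi)), \]
so $d_\infty(T\varphi,T\psi) \le \Phi(d_\infty(\varphi,\psi))$. Since $I$ is finite and each $\phi_i$ satisfies \eqref{n-iterate}, one checks that $\Phi$ is increasing with $\Phi^n(t)\to 0$ for every $t>0$, so Matkowski's theorem gives a unique fixed point $\varphi \in \mathcal{F}$, which is the desired solution of \eqref{conjugate}. The hard part will not be the contraction estimate itself but the careful bookkeeping behind Step 1 and the assertion $(T\varphi)|_{\widetilde{A}} = \widetilde{\varphi}_0$; both hinge on combining \eqref{inv-A} with Assumption \ref{ass-new-compat}(ii) to iterate \eqref{conjugate} along arbitrary $f_i$-preimage chains without escaping the controlled set $\widetilde{A}$.
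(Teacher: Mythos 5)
Your proposal is correct and follows essentially the same route as the paper: extend $\varphi_0$ to $\widetilde A$ via Assumption \ref{ass-new-compat} and the invariance \eqref{inv-A}, then obtain the rest of $\varphi$ as the unique fixed point of the transfer operator, which is $\bigl(\max_{i\in I}\phi_i\bigr)$-contractive, by Matkowski's theorem. The only (cosmetic) difference is that you run the fixed-point argument on maps defined on all of $X$ pinned to $\widetilde\varphi_0$ on $\widetilde A$, whereas the paper works on the space of bounded maps on $X\setminus\widetilde A$ and glues afterwards; do note that the claim $\Phi^n(t)\to 0$ for $\Phi=\max_i\phi_i$, which you leave as ``one checks,'' is exactly the small pigeonhole lemma the paper proves explicitly.
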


\begin{proof}
Let $\varphi_1$ be a map on $\widetilde A$ defined by $\varphi_1(y) := \varphi_0(y)$ if $y \in A$, and, 
\[ \varphi_1\left(f_{i_1} \circ \cdots \circ f_{i_n}(x)\right) := g_{i_1}\left(f_{i_2} \circ \cdots \circ f_{i_n}(x), \cdot \right) \circ \cdots \circ g_{i_n}(x, \cdot) \circ \varphi_0 (x), \ \ x \in A, \ n \ge 1. \]
This is well-defined due to Assumption \ref{ass-new-compat}. 
We now check this. 
Assume for $n, m \ge 1$ and $x, y \in A$, 
\[ f_{i_1} \circ \cdots \circ f_{i_n}(x) = f_{j_1} \circ \cdots \circ f_{j_m}(y). \]
then, 
$f_{i_2} \circ \cdots \circ f_{i_n}(x) \in A$ and $f_{j_2} \circ \cdots \circ f_{j_m}(x) \in A$. 
Now use Assumption \ref{ass-new-compat} (i). 
Assume for $n \ge 1, m = 0$ and $x, y \in A$,  
\[ f_{i_1} \circ \cdots \circ f_{i_n}(x) = y \in A. \]
then, use Assumption \ref{ass-new-compat} (ii). 

Recall Assumption \ref{ass-A}. 
If $x \in A_i \cap \widetilde A$, then, $f_i (x) \in \widetilde A$. 
By the definition of $\varphi_1$ and Assumption \ref{ass-new-compat}, 
\eqref{conjugate} holds for each $i \in I$ and $x \in \widetilde A$.

Hence, if $X = \widetilde A$, then we have \eqref{conjugate} for each $i \in I$ and $x \in X_i$.
   
Assume $X \setminus \widetilde A \ne \emptyset$.    
Let $\mathcal{B}$ be the set of bounded maps from $X \setminus \widetilde A$ to $Y$. 
By using Assumption \ref{ass-A} and the definition of $\widetilde A$, 
for each $x \in X \setminus \widetilde A$, 
there is a unique $i \in I$ such that $x \in f_i (X_i)$ and $f_i^{-1}(x) \in X \setminus \widetilde A$.  

Now we can define $T : \mathcal{B} \to \mathcal{B}$ such that 
\[ T[\varphi](x) := g_i \left(f_i^{-1}(x), \varphi(f_i^{-1}(x)) \right),  \ x \in X \setminus \widetilde A,  \]
for every $\varphi \in \mathcal{B}$.

We now put the following metric on $\mathcal{B}$. 
\[ D(\varphi_1, \varphi_2) := \sup_{x \in X \setminus \widetilde A} d_Y (\varphi_1(x), \varphi_2(x)) < +\infty, \ \ \varphi_1, \varphi_2 \in \mathcal{B}. \] 
Then, 
$(\mathcal{B}, D)$ is a complete metric space.  

\begin{Lem}
\eqref{n-iterate} holds for $\max_{i \in I} \phi_i$. 
\end{Lem} 

\begin{proof}[Proof of Lemma]
For each $t$ and $n$, there exists a sequence $(i_k)_{1 \le k \le n}$ such that 
\[ \left(\max_{i \in I} \phi_i\right)^n (t) = \phi_{i_1} \circ \cdots \circ \phi_{i_n} (t). \]
 
Then, by using the fact that $\phi_i (t) < t$ and the pigeonhole principle,  
\[ \phi_{i_1} \circ \cdots \circ \phi_{i_n} (t) \le \max_{i \in I} \left\{\phi_i^{\lfloor n/ N\rfloor} (t)\right\}. \]
Here $\lfloor n/ N\rfloor$ denotes the integer part of $n/ N$. 
Recall that \eqref{n-iterate} holds for $\phi_i$ for each $i \in I$. 
Thus \eqref{n-iterate} holds for $\max_i \phi_i$. 
\end{proof}

Return to the proof of Theorem \ref{exist-unique}.   
Now $T$ is $(\max_{i \in I} \phi_i)$-contractive. 
Hence, by the Matkowski fixed point theorem \cite{Ma75}, 
there exists a unique fixed point $\widetilde \varphi$ of $T$.
Now we have that for each $i \in I$, 
\begin{equation}\label{pre} 
\varphi = g_i \circ \varphi \circ f_i^{-1} \textup{ on $f_i (X_i)\setminus \widetilde A$}. 
\end{equation}    

By Assumption \ref{ass-A}, 
if $x \in X_i \setminus \widetilde A$, then, $f_i (x) \in X \setminus \widetilde A$. 
By this and \eqref{pre},
\eqref{conjugate} holds for $ \varphi = \widetilde \varphi$ on $X \setminus \widetilde A$. 

Let $\varphi$ be the map which equals $\varphi_1$ on $\widetilde A$ and $\widetilde \varphi$ on $X \setminus \widetilde A$. 
Therefore, \eqref{conjugate} holds. 
\end{proof} 

\begin{Rem}
(i) In \cite[Definition 1]{SB17}, it is stated that 
each value of the solution on $A$ has been previously determined by partially solving the system or by initial conditions. 
However, $X = A$ can happen, and in this case we may not be able to obtain values of the solution by the equation itself. 
We will give such examples below.\\
(ii) The assumption that $X$ is a bounded metric space in \cite[Theorem 1]{SB17} are removed. 
We do not put any topology on $X$, so in the above theorem we do not discuss the continuity of the solution.\\
(iii) Assumption \ref{ass-new-compat} is a necessary condition of Theorem \ref{exist-unique}.   
\end{Rem}

\subsection{Result for the case that $g_i$ does not depend on $x$}

In the proof of Theorem \ref{exist-unique}, 
the injectivity of $f_i$ is needed. 
Now we investigate the case that the injectivity of $f_i$ {\it fails}.

\begin{Ass}\label{ass-new-non-inj}
Assume $g_i = g_i (x, \cdot)$. 
Let $K$ be a unique compact subset of $Y$ such that $K = \cup_{i \in I} g_i (K)$. 
We assume that for any $x \in A$, 
there exists a {\it unique} infinite sequence $(i_n)_n \in I^{\mathbb{N}}$ such that 
\begin{equation}\label{x-in} 
x \in \bigcap_{n \ge 1} \textup{Image}(f_{i_1} \circ \cdots \circ f_{i_n}), 
\end{equation}  
and furthermore, 
\[ \bigcap_{n \ge 1} g_{i_1} \circ \cdots \circ g_{i_n}(K) =  \{\varphi_0 (x)\}. \]
\end{Ass}

\begin{Thm}\label{non-injective}
Under Assumptions \ref{ass-f}, \ref{ass-new-compat}, \ref{ass-g-wc}, and \ref{ass-new-non-inj},    
three exists a unique bounded solution $\varphi$ of \eqref{conjugate} such that $\varphi = \varphi_0$ on $A$. 
\end{Thm}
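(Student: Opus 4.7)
The plan is to construct $\varphi$ directly by symbolic coding from the iterated function system $(X, \{f_i\}_{i \in I})$, assigning to each address its image in $K$ under the corresponding composition of the $g_i$'s. The preparatory observation is that for every $(i_n) \in I^{\N}$, the nested intersection $\bigcap_{n \ge 1} g_{i_1} \circ \cdots \circ g_{i_n}(K)$ consists of a single point of $K$: the sets are decreasing (since $K = \bigcup_i g_i(K)$ forces $g_i(K) \subset K$), they lie in the compact set $K$, and the iteration lemma proved inside the proof of Theorem \ref{exist-unique}, combined with $\phi_i$-contractivity, gives $\diam(g_{i_1} \circ \cdots \circ g_{i_n}(K)) \to 0$. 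By Assumption \ref{ass-f} applied iteratively, every $x \in X$ admits at least one coding $(i_n)$ with $x \in \bigcap_n \textup{Image}(f_{i_1} \circ \cdots \circ f_{i_n})$, and I would set $\varphi(x)$ to be the unique point of the corresponding intersection in $K$.

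Granting well-definedness, the verification of \eqref{conjugate} is routine: if $(i_n)$ codes $x$, then $(i, i_1, i_2, \dots)$ codes $f_i(x)$, and continuity of $g_i$ (which follows from $\phi_i$-contractivity) yields $\varphi(f_i(x)) = g_i(\varphi(x))$. The identity $\varphi = \varphi_0$ on $A$ is the second half of Assumption \ref{ass-new-non-inj}, and boundedness is automatic since $\varphi(X) \subset K$ and $K$ is compact. Uniqueness follows from a standard contraction argument: for any other bounded solution $\psi$ with $\psi = \varphi_0$ on $A$, iterating \eqref{conjugate} along a coding $(i_n)$ of $x$ with preimage path $(x_n)$ yields
\[ d_Y(\psi(x), \varphi(x)) \le \phi_{i_1} \circ \cdots \circ \phi_{i_n}\bigl(d_Y(\psi(x_n), \varphi(x_n))\bigr), \]
which tends to $0$ by joint boundedness of $\psi$ and $\varphi$ and the iteration lemma.

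The main obstacle will be the well-definedness step. Given two codings $(i_n)$ and $(j_n)$ of $x$, let $m = \min\{n : i_n \ne j_n\}$. Writing $P := g_{i_1} \circ \cdots \circ g_{i_{m-1}}$ for the common prefix, the task reduces to showing $g_{i_m}(z_i) = g_{j_m}(z_j)$, where $z_i$ (respectively $z_j$) is the single point arising from the tail coding $(i_{m+1}, i_{m+2}, \dots)$ (respectively $(j_{m+1}, j_{m+2}, \dots)$). The strategy is to trace the two codings back through $P$ to a common point $y$ at level $m-1$, and then produce contact points $a \in A_{i_m}$ and $b \in A_{j_m}$ with $f_{i_m}(a) = y = f_{j_m}(b)$ whose infinite codings are exactly these tails; Assumption \ref{ass-new-non-inj} identifies $z_i = \varphi_0(a)$ and $z_j = \varphi_0(b)$, while Assumption \ref{ass-new-compat}(i), which here reads $g_{i_m}(\varphi_0(a)) = g_{j_m}(\varphi_0(b))$ since $g_i$ is independent of $x$, closes the argument. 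The delicate point is the back-tracing: non-injectivity of the $f_i$'s means preimage paths along $P$ are multi-valued, so extracting a single contact point whose infinite coding coincides with the given tail requires a finiteness or diagonal compactness argument on $I$.
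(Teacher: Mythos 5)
Your uniqueness argument is essentially correct and, if anything, cleaner than the paper's (which splits into cases and invokes the uniqueness clause of Assumption \ref{ass-new-compat}); note only that you do not need a \emph{consistent} infinite preimage path, just some preimage of $x$ under $f_{i_1}\circ\cdots\circ f_{i_n}$ at each level $n$, which the coding supplies, together with the iteration lemma for $\max_i\phi_i$. The existence half, however, has a genuine gap exactly where you flag it. Since you define $\varphi(x)$ as the point of $\bigcap_{n}g_{i_1}\circ\cdots\circ g_{i_n}(K)$ for an arbitrary coding of $x$, the functional equation forces you to prove that \emph{every} coding of $x$ yields the same point, i.e.\ to compare two infinite codings branching at some position $m$. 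Your reduction to contact points needs (1) a single $y$ with $f_{i_1}\circ\cdots\circ f_{i_{m-1}}(y)=x$ lying in $\textup{Image}(f_{i_m}\circ\cdots\circ f_{i_n})$ and in $\textup{Image}(f_{j_m}\circ\cdots\circ f_{j_n})$ for all $n$ simultaneously, and (2) a single $a\in f_{i_m}^{-1}(y)$ lying in $\bigcap_{n>m}\textup{Image}(f_{i_{m+1}}\circ\cdots\circ f_{i_n})$. Both are infinite intersections of nested nonempty sets, and in the generality of the theorem $X$ carries no topology at all, so no compactness or diagonalization is available to make these intersections nonempty; the finiteness of $I$ controls only the branching at each single level, not the choice of preimage points. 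As stated, this step does not go through.

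The paper avoids the problem by a different construction. For $x$ whose coding is not unique it does not use the infinite intersection at all: it takes the maximal level $N(x)$ up to which the prefix is unique, writes $x=f_{i_1}\circ\cdots\circ f_{i_{N(x)+1}}(x_{N(x)+1})$ with $x_{N(x)+1}\in A$, and sets $\varphi(x)=g_{i_1}\circ\cdots\circ g_{i_{N(x)+1}}(\varphi_0(x_{N(x)+1}))$, so that well-definedness is a finite matter settled directly by Assumption \ref{ass-new-compat}. The infinite intersection is used only for points with a unique coding (where there is nothing to compare) and on $A$ itself via Assumption \ref{ass-new-non-inj}. To repair your argument you would either have to impose topological hypotheses on $X$ making the two intersections above nonempty, or switch to this finite reduction at the first branching level.
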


\begin{Rem}
If $g_i = g_i (x, \cdot)$, then, this is an extension of Theorem \ref{exist-unique}. 
In the following proof, we do not use any fixed point theorems. 
\end{Rem}

\begin{proof}[Proof of Theorem \ref{non-injective}]
First we show the existence. 
Let $x \in X$. 

We first remark that by Assumption \ref{ass-f}, 
for any $x \in X$ there exists at least one infinite sequence $(i_n)_n \in I^{\mathbb{N}}$ satisfying \eqref{x-in}. 

If there exists a {\it unique} infinite sequence $(i_n)_n \in I^{\mathbb{N}}$ satisfying \eqref{x-in}, 
then, by Assumption \ref{ass-g-wc}, 
we let $\varphi (x) \in Y$ be an element such that 
\begin{equation}\label{one-pt} 
\bigcap_{n \ge 1} g_{i_1} \circ \cdots \circ g_{i_n}(K) =  \{\varphi (x)\}. 
\end{equation} 

Otherwise, there exists a maximal integer $n = N(x)$ such that there exists a unique $(i_1, \dots, i_{n})$ such that $x \in \textup{Image}(f_{i_1} \circ \cdots \circ f_{i_n})$.  
Then there exist {\it at least two candidates of} $i_{N(x)+1} \in I$ and $x_{N(x)+1} \in A$ such that 
\begin{equation}\label{non-unique-x} 
x = f_{i_1} \circ \cdots \circ f_{i_{N(x) + 1}}(x_{N(x)+1}) 
\end{equation} 
and let 
\[ \varphi (x) := g_{i_1} \circ \cdots \circ g_{i_{N(x) + 1}}(\varphi_0(x_{N(x)+1})). \]
This is well-defined, that is, $\varphi (x)$ does not depend on the choice of $i_{N(x)+1} \in I$ and $x_{N(x)+1} \in A$ satisfying \eqref{non-unique-x}, 
due to Assumption \ref{ass-new-compat}.    

We need to show that $\varphi = \varphi_0$ on $A$.  
Let $x \in A$. 
By Assumption \ref{ass-new-non-inj}, 
there exists a unique infinite sequence $(i_n)_n \in I^{\mathbb{N}}$ satisfying \eqref{x-in}.   
Then \eqref{one-pt} holds.  
Now by Assumption \ref{ass-new-non-inj}, 
$\varphi (x) = \varphi_0 (x)$.  

Second we show the uniqueness.   
 Let $\varphi_1$ and $\varphi_2$ be bounded the solution of \eqref{conjugate}.  
Let $x \in X$. 

If there exists a unique infinite sequence $(i_n)_n \in I^{\mathbb{N}}$ satisfying \eqref{x-in}, 
then, by \eqref{conjugate} and the boundedness of $\varphi_1$ and $\varphi_2$,   
\[ \bigcap_{n \ge 1} g_{i_1} \circ \cdots \circ g_{i_n} \left(\textup{Image}(\varphi_1) \cup \textup{Image}(\varphi_2)\right) =  \{\varphi_1 (x)\} =  \{\varphi_2 (x)\}. \]
Hence $\varphi_1 (x) = \varphi_2 (x)$. 

Otherwise, there exists a maximal integer $n = N(x)$ such that there exists a unique $(i_1, \dots, i_{n})$ satisfying \eqref{x-in}.  
Then there exists at least two candidates of $i_{N(x)+1} \in I$ and $x_{N(x)+1} \in A$ satisfying \eqref{non-unique-x}.  
By this, \eqref{conjugate} and Assumption \ref{ass-new-compat}, 
\[ \varphi_i (x) = g_{i_1} \circ \cdots \circ g_{i_{N(x) + 1}}(\varphi_i (x_{N(x)+1})). \]
By the uniqueness for $\varphi_0$ in Assumption \ref{ass-new-compat}, 
\[ \varphi_0 = \varphi_1  = \varphi_2 \textup{ on } A.  \]  
Hence $ \varphi_1 (x)  =  \varphi_2 (x)$.  
\end{proof} 

\begin{Rem}
We are not sure whether there exist relationships between Assumptions \ref{ass-A} and \ref{ass-new-non-inj}. 
\end{Rem}


\section{Regularity}

In this section we always assume that $(X, d_X)$ and $(Y, d_Y)$ are two compact metric spaces 
and that there exist weak contractions $f_i, i \in I$, on $X$ and $g_i, i \in I$, on $Y$ in the sense of Browder such that 
\[ X = \bigcup_{i \in I} f_i (X) \textup{ and } Y = \bigcup_{i \in I} g_i (Y).\]  
Furthermore assume that there exists a unique solution $\varphi$ of \eqref{conjugate}. 
 
The aim of this section is to give sufficient conditions for each of the following:

\begin{Def}
Let $\alpha > 0$ and $a \in [0, +\infty]$. \\
(1) For a non-empty subset $U$ of $X$, 
we say that $(\alpha, U, a)$ holds if 
\[ \sup_{x_1, x_2 \in U, \ x_1 \ne x_2} \frac{d_Y (\varphi(x_1), \varphi(x_2))}{d_X (x_1, x_2)^{\alpha}} = a.  \]
(2) For $x \in X$, 
we say that $(\alpha, x, a)$ holds if 
\[ \limsup_{z \to x} \frac{d_Y (\varphi(z), \varphi(x))}{d_X (z, x)^{\alpha}} = a.  \]
\end{Def} 

For a subset $A$ of a metric space and for $s, \delta > 0$, let 
\[ \mathcal{H}^{\delta}_{s}(A) := \inf\left\{ \sum_{i = 1}^{\infty} \textup{diam}(U_i)^s \mid A \subset \bigcup_{i=1}^{\infty} U_i, \textup{diam}(U_i) \le \delta, \forall i   \right\}, \]
where we let $\textup{diam}(U)$ be the supremum of distances of two points of $U$.

This value is monotone decreasing with respect to $\delta$, we can let 
\[ \mathcal{H}_{s}(A) := \lim_{\delta \to 0+} \mathcal{H}^{\delta}_{s}(A).\] 
Then, we define the Hausdorff dimension of $A$ as follows: 
\[ \dim_H (A) := \sup\left\{s > 0 : \mathcal{H}_{s}(A) = +\infty\right\} = \inf\left\{s > 0 : \mathcal{H}_{s}(A) = 0\right\}.  \]
\begin{Lem}\label{Base-dim-Haus}
Let $(X, d_X)$ and $(Y, d_Y)$ be metric spaces. 
Let $A \subset X$ and $B \subset Y$ be non-empty. 
Let $\varphi : A \to B$ be a surjective map. 
If $\alpha > \dim_H (A) / \dim_H (B)$, then, 
$(\alpha, A, +\infty)$ holds. 
\end{Lem}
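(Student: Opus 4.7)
The plan is to argue by contradiction: suppose $(\alpha, A, a)$ fails with $a = +\infty$, so that
\[ C := \sup_{x_1, x_2 \in A, \ x_1 \ne x_2} \frac{d_Y (\varphi(x_1), \varphi(x_2))}{d_X (x_1, x_2)^{\alpha}} < +\infty.  \]
This exactly says $\varphi : A \to B$ is $\alpha$-Hölder continuous with constant $C$. I would then invoke the standard fact that an $\alpha$-Hölder map sends sets of Hausdorff dimension at most $s$ to sets of Hausdorff dimension at most $s/\alpha$, applied to the surjection $\varphi : A \twoheadrightarrow B$, to obtain $\dim_H(B) \le \dim_H(A)/\alpha$, i.e.\ $\alpha \le \dim_H(A)/\dim_H(B)$, contradicting the hypothesis.

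The key technical step is to reprove (or at least carefully state) the Hölder-to-Hausdorff estimate in the language used in the paper. For any $\delta > 0$ and any cover $\{U_i\}_{i \ge 1}$ of $A$ with $\textup{diam}(U_i) \le \delta$, the Hölder bound gives
\[ \textup{diam}(\varphi(U_i \cap A)) \le C \cdot \textup{diam}(U_i)^{\alpha} \le C \delta^{\alpha}.  \]
Since $\varphi$ is surjective, $\{\varphi(U_i \cap A)\}_{i \ge 1}$ covers $B$, whence for any $s > 0$,
\[ \mathcal{H}^{C\delta^\alpha}_{s/\alpha}(B) \le \sum_{i=1}^{\infty} \textup{diam}(\varphi(U_i \cap A))^{s/\alpha} \le C^{s/\alpha} \sum_{i=1}^{\infty} \textup{diam}(U_i)^{s}.  \]
Taking the infimum over covers and letting $\delta \to 0+$ yields $\mathcal{H}_{s/\alpha}(B) \le C^{s/\alpha} \mathcal{H}_s(A)$, and so $\dim_H(B) \le \dim_H(A)/\alpha$.

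From the dimension inequality one concludes $\alpha \le \dim_H(A)/\dim_H(B)$, directly contradicting the assumption $\alpha > \dim_H(A)/\dim_H(B)$. The only subtlety — and the one point I would pause on — is the edge case when $\dim_H(B) = 0$ (so that the ratio $\dim_H(A)/\dim_H(B)$ is interpreted as $+\infty$ and the hypothesis becomes vacuous) or when $\dim_H(A) = +\infty$; in the interesting regime where both quantities are finite and positive the argument above is clean, and one should probably note briefly that the statement is vacuous otherwise. No deep obstacle is expected: this is essentially a one-line application of the Hölder/Hausdorff-dimension folklore, and the main task is simply to write the covering argument cleanly in the paper's notation.
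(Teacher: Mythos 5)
Your proposal is correct and follows essentially the same route as the paper: assume the supremum is finite, note this means $\varphi$ is $\alpha$-H\"older, apply the standard H\"older-to-Hausdorff-dimension estimate (which the paper simply cites as the argument of \cite[Proposition 3.3]{Fal14}, while you spell out the covering computation), and use surjectivity to get $\dim_H(B) \le \dim_H(A)/\alpha$, contradicting the hypothesis. The only difference is that you write out the covering argument explicitly and flag the degenerate cases, neither of which changes the substance.
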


\begin{proof} 
Assume that $(\alpha, A, +\infty)$ fails.
Then, 
\[ \sup_{x_1, x_2 \in A, \ x_1 \ne x_2} \frac{d_Y (\varphi(x_1), \varphi(x_2))}{d_X (x_1, x_2)^{\alpha}} < +\infty, \]
that is, $\varphi$ is $\alpha$-H\"older continuous. 
Therefore, in the same manner as in the proof of \cite[Proposition 3.3]{Fal14}\footnote{It is stated for subsets of the Euclid space, but it  holds also for general metric spaces.},  
we can show that 
\[ \dim_H (\varphi(A)) \le \frac{\dim_H (A)}{\alpha}. \]
Since $\varphi$ is surjective, 
\[ \dim_H (B) \le \frac{\dim_H (A)}{\alpha}. \]
This contradicts the assumption that $\alpha > \dim_H (A) / \dim_H (B)$. 
\end{proof}

For a Borel probability measure $\mu$ on a metric space, 
let 
\[ \dim_H \mu := \inf\left\{\dim_H K \mid K : \textup{ Borel measurable } \mu(K) > 0 \right\}. \] 

Let $(X, d_X)$ and $(Y, d_Y)$ be two compact metric spaces. 
Assume that there exist weak contractions $f_i, i \in I$, on $X$ and $g_i, i \in I$, on $Y$ in the sense of Browder such that 
\[ X = \bigcup_{i \in I} f_i (X) \textup{ and } Y = \bigcup_{i \in I} g_i (Y).\]  

For $p_i, i \in I$, be numbers in $(0,1)$ such that $\sum_{i \in I} p_i = 1$, 
let $\mu_{\{p_i\}}$ and $\nu_{\{p_i\}}$ be two probability measures on $X$ and $Y$ 
such that 
\[ \mu_{\{p_i\}} = \sum_{i \in I} p_i \mu_{\{p_i\}} \circ f_i^{-1}, \textup{ and } \nu_{\{p_i\}} = \sum_{i \in I} p_i \nu_{\{p_i\}} \circ g_i^{-1}.\]    
The existences and uniquenesses of $\mu_{\{p_i\}}$ and $\nu_{\{p_i\}}$ under the assumption of the above theorem are assured by Fan \cite{Fan96}\footnote{For the case that each $f_i$ is a contraction, the existence and uniqueness of $\mu_{\{p_i\}_i}$ are classical and well-known (see Hutchinson \cite{Hu81}). The existence and uniqueness of $\mu_{\{p_i\}_i}$ for the case that each $f_i$ is a weak contraction is originally shown by \cite{Fan96} with use of an ergodic theorem. Alternative simpler proofs are given by \cite{AJS17, GMM, O18}}. 

\begin{Prop}\label{Thm-dim-Haus}
Assume that $\varphi$ is a solution of \eqref{conjugate}. 
Assume $\dim_H \nu_{\{p_i\}} > 0$. 
Let \[ \alpha > \frac{\dim_H \mu_{\{p_i\}}}{\dim_H \nu_{\{p_i\}}}.\]      
Then, $(\alpha, U, +\infty)$ holds for every non-empty open set $U$ of $X$. 
\end{Prop}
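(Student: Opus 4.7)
The plan is to reduce the statement to Lemma \ref{Base-dim-Haus} by exhibiting, inside the given open set $U$, a Borel subset $A$ whose Hausdorff dimension is close to $\dim_H \mu_{\{p_i\}}$ while $\dim_H \varphi(A)$ is at least $\dim_H \nu_{\{p_i\}}$. The construction of $A$ will combine two ingredients: a measure-theoretic step based on the pushforward identity $\varphi_* \mu_{\{p_i\}} = \nu_{\{p_i\}}$, and a localization step that exploits the self-similar structure to place everything inside $U$.

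First I would verify the pushforward identity. Applying $\varphi_*$ to $\mu_{\{p_i\}} = \sum_i p_i \mu_{\{p_i\}} \circ f_i^{-1}$ and using $\varphi \circ f_i = g_i \circ \varphi$ shows that $\varphi_* \mu_{\{p_i\}} = \sum_i p_i (\varphi_* \mu_{\{p_i\}}) \circ g_i^{-1}$, which by the uniqueness result of Fan \cite{Fan96} equals $\nu_{\{p_i\}}$. For the localization, since each $f_i$ is a Browder weak contraction and \eqref{n-iterate} holds, the diameters of the cells $F_w(X) := f_{i_1}\circ\cdots\circ f_{i_n}(X)$ tend to zero as $|w| \to \infty$, so I can pick a finite word $w$ with $F_w(X) \subset U$; writing $G_w := g_{i_1}\circ\cdots\circ g_{i_n}$, iteration of the conjugate equation gives $\varphi \circ F_w = G_w \circ \varphi$.

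Next, choose $\epsilon > 0$ small enough that $\alpha > (\dim_H \mu_{\{p_i\}} + \epsilon)/\dim_H \nu_{\{p_i\}}$, and by the definition of $\dim_H \mu_{\{p_i\}}$ select a Borel set $K \subset X$ with $\mu_{\{p_i\}}(K) > 0$ and $\dim_H K < \dim_H \mu_{\{p_i\}} + \epsilon$. Set $A := F_w(K) \subset U$. Because $\phi_i(t) \le t$ forces each $f_i$, hence $F_w$, to be $1$-Lipschitz, I get $\dim_H A \le \dim_H K$. Iterating the self-similar identity for $\nu_{\{p_i\}}$ together with $\varphi \circ F_w = G_w \circ \varphi$, $\varphi_* \mu_{\{p_i\}} = \nu_{\{p_i\}}$, and $K \subset \varphi^{-1}(\varphi(K))$ yields $\nu_{\{p_i\}}(\varphi(A)) = \nu_{\{p_i\}}(G_w(\varphi(K))) \ge p_w \nu_{\{p_i\}}(\varphi(K)) \ge p_w \mu_{\{p_i\}}(K) > 0$, so $\dim_H \varphi(A) \ge \dim_H \nu_{\{p_i\}}$ by the definition of $\dim_H \nu_{\{p_i\}}$. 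Lemma \ref{Base-dim-Haus} applied to $\varphi : A \to \varphi(A)$ then gives $(\alpha, A, +\infty)$, and $A \subset U$ produces the desired $(\alpha, U, +\infty)$.

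The main obstacle is measurability: for the pushforward identity and for the chain $\nu_{\{p_i\}}(\varphi(K)) \ge \mu_{\{p_i\}}(K)$ to make rigorous sense, one needs $\varphi$ (and hence $F_w$, $G_w$) to be at least Borel measurable, so that the forward images $\varphi(K)$ and $\varphi(A)$ are analytic and therefore universally measurable on the compact metric spaces $X$ and $Y$; inner regularity on Polish spaces then produces Borel subsets of positive $\nu_{\{p_i\}}$-measure that supply the dimension lower bound. Injectivity of $F_w$ and $G_w$, needed to obtain $\mu_{\{p_i\}}(F_w(K)) \ge p_w \mu_{\{p_i\}}(K)$ from the iterated self-similar identity, follows from the strict Browder inequality $\phi_i(t) < t$ for $t > 0$. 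In the intended setting of Section 3 the unique $\varphi$ is continuous as the fixed point of a contractive operator on a space of continuous maps, so these technicalities cause no genuine trouble.
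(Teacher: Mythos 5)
Your proposal is correct and follows essentially the same route as the paper's proof: establish $\nu_{\{p_i\}} = \mu_{\{p_i\}}\circ\varphi^{-1}$ from the conjugate equation together with Fan's uniqueness theorem, localize a positive-$\mu_{\{p_i\}}$-measure set of dimension at most $\dim_H \mu_{\{p_i\}}+\epsilon$ into $U$ via a cylinder map $f_{i_1}\circ\cdots\circ f_{i_n}$, bound the $\nu_{\{p_i\}}$-measure of its $\varphi$-image below by $p_{i_1}\cdots p_{i_n}\,\mu_{\{p_i\}}(K)>0$, and conclude with Lemma \ref{Base-dim-Haus}. (One small aside: injectivity of $f_{i_1}\circ\cdots\circ f_{i_n}$ is neither implied by the Browder condition, since a weak contraction can collapse points, nor actually needed, because $K\subset (f_{i_1}\circ\cdots\circ f_{i_n})^{-1}\left(f_{i_1}\circ\cdots\circ f_{i_n}(K)\right)$ already yields the measure inequality.)
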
 

This assertion is useful if we can know the values of $\dim_H \mu_{\{p_i\}}$ and $\dim_H \nu_{\{p_i\}}$. 
Fan-Lau \cite{FL99} might be useful under certain regularity assumptions for the two IFSs $(X, \{f_i\}_i)$ and $(Y, \{g_i\}_i)$. 

\begin{proof}
Let $\alpha > \dim_H \mu_{\{p_i\}} / \dim_H \nu_{\{p_i\}}$.  
Let $U$ be an arbitrarily open set of $X$. 
Take $\epsilon > 0$ such that 
\[ \alpha > \frac{\dim_H \mu_{\{p_i\}} + \epsilon}{\dim_H \nu_{\{p_i\}}}.\] 
Then, we can take $A \subset X$ such that 
\[ \dim_H A \le \dim_H \mu_{\{p_i\}} + \epsilon.\]   

Since $X$ is compact and $f_i$ are weak contractions, 
there exists  $i_1, \dots, i_n$ such that 
\[ f_{i_1} \circ \cdots \circ f_{i_n}(A) \subset U.\]    

Since each $f_i$ is Lipschitz continuous, 
\[ \dim_H f_{i_1} \circ \cdots \circ f_{i_n}(A) \le \dim_H A.\]

Take a Borel subset $B$ of $Y$ arbitrarily.
Then, by the definition of $\mu_{\{p_i\}} $
\[ \mu_{\{p_i\}} (\varphi^{-1} (B)) = \sum_{i \in I} p_i \mu_{\{p_i\}}  \left( f_i^{-1} (\varphi^{-1} (B)) \right). \]
By \eqref{conjugate}, we have that for each $i \in I$, 
\[ f_i^{-1} (\varphi^{-1} (B)) = (\varphi \circ f_i )^{-1} (B) =  (g_i \circ \varphi)^{-1} (B) = \varphi^{-1} (g_i^{-1}(B)). \]
Hence, 
\[ \mu_{\{p_i\}} (\varphi^{-1} (B)) = \sum_{i \in I} p_i \mu_{\{p_i\}} \varphi^{-1} (g_i^{-1}(B)).  \]
By this and the uniqueness of self-similar measures established in \cite{Fan96},   
we have that 
\[ \nu_{\{p_i\}} = \mu_{\{p_i\}} \circ \varphi^{-1}, \] 
and hence,   
\[ \nu_{\{p_i\}}\left(\varphi (f_{i_1} \circ \cdots \circ f_{i_n}(A)) \right) \ge \mu_{\{p_i\}}(f_{i_1} \circ \cdots \circ f_{i_n}(A)). \]

By the definition of $\mu$, 
\[ \mu_{\{p_i\}}(f_{i_1} \circ \cdots \circ f_{i_n}(A)) = \sum_{j_1, \dots, j_n \in I} p_{j_1} \cdots p_{j_n} \mu_{\{p_i\}}\left((f_{j_1} \circ \cdots \circ f_{j_n})^{-1}(f_{i_1} \circ \cdots \circ f_{i_n}(A)) \right) \]
\[ \ge p_{i_1} \cdots p_{i_n} \mu_{\{p_i\}}(A) > 0. \]

By the definition of $\dim_H \nu_{\{p_i\}}$, 
\[ \dim_H \varphi (f_{i_1} \circ \cdots \circ f_{i_n}(A)) \ge \dim_H \nu_{\{p_i\}}. \]

Therefore, 
\[ \alpha > \frac{\dim_H \mu_{\{p_i\}} + \epsilon}{\dim_H \nu_{\{p_i\}}} \ge \frac{\dim_H f_{i_1} \circ \cdots \circ f_{i_n}(A)}{\dim_H \varphi (f_{i_1} \circ \cdots \circ f_{i_n}(A))}. \]

Now the assertion follows from  Lemma \ref{Base-dim-Haus}.    
\end{proof}

\subsection{Local regularity}

Let $I = \{0,1,\dots, N-1\}$. 

\subsubsection{Assumptions for $(X, \{f_i\}_i)$}

Hereafter, we denote the set of linear bounded transformations on a separable Banach space $E$ by $B(E, E)$. 

\begin{Ass}\label{ass-x-f}
(i) $E_1$ is a separable Banach space.\\ 
(ii) $X$ is a compact subset of $E_1$ such that 
\[ \bigcup_{i \in I} f_i (X) = X  \] 
and its interior is non-empty.\\
(iii) each $f_i$ is weakly contractive on $X$.\\    
(iv) There exists the total derivative of $f_i$ at $x \in X$, which is denoted by $Df_i(x) \in B(E_1, E_1)$.\\ 
(v) Assume that for each $i$ and $x \in X$,   
$Df_i (x)$ is non-degenerate, specifically, 
\[ \inf_{z \ne 0} \frac{|Df_i (x)(z)|}{|z|} > 0, \]
where $| \cdot |$ is the norm of $E_1$. 
If so, $Df_i (x)$ is invertible and 
\[ \| (Df_i (x))^{-1} \|^{-1} = \left(\sup_{w \in Df_i (x)(E_1), w \ne 0} \frac{|(Df_i (x))^{-1}w|}{|w|} \right)^{-1} \]
\[= \inf_{w \in Df_i (x)(E_1), w \ne 0} \frac{|w|}{|(Df_i (x))^{-1}w|} = \inf_{z \ne 0} \frac{|Df_i (x)(z)|}{|z|}. \]
\end{Ass}  

Hereafter, $\lfloor z \rfloor$ denotes the maximal integer which does not larger than a real number $z$. 

\begin{Ass}[regularity property]\label{ass-reg}
We say that $(X, \{f_i\}_i)$ satisfies a regularity property if for any $\epsilon > 0$
there exists $C > 0$ such that for every $n \ge 0$, $i_1, \dots, i_{\lfloor n(1+\epsilon)\rfloor }$, 
\[ \textup{dist} \left(f_{i_1} \circ \cdots \circ f_{i_{\lfloor n(1+\epsilon)\rfloor }}(X),  X \setminus f_{i_1} \circ \cdots \circ f_{i_{n}}(X) \right) 
\ge C \textup{diam}\left(f_{i_1} \circ \cdots \circ f_{i_{\lfloor n(1+\epsilon)\rfloor }}(X)\right). \]
\end{Ass} 

\begin{Ass}[Existence of two distant points]\label{ass-diam} 
There exists $c > 0$ such that for every $n \ge 0$ and $(i_1, \dots, i_n)$, 
\[ d_X \left(f_{i_1} \circ \cdots \circ f_{i_{n}}(\textup{fix}(f_0)), f_{i_1} \circ \cdots \circ f_{i_{n}}(\textup{fix}(f_{N-1}))\right) \ge c \ \textup{diam}\left(f_{i_1} \circ \cdots \circ f_{i_{n}}(X)\right). \]
\end{Ass} 

\begin{Ass}[Measure separation property]\label{ass-msp} 
Assume that 
\[ \mu_{\{p_i\}}\left( f_{i_1} \circ \cdots \circ f_{i_n}(X) \cap f_{j_1} \circ \cdots \circ f_{j_n}(X) \right) = 0 \]
holds whenever $(i_1, \dots, i_n) \ne (j_1, \dots, j_n)$. 
\end{Ass}

\subsubsection{Assumptions for $(Y, \{g_i\}_i)$}

The following corresponds to Assumption \ref{ass-x-f}. 

\begin{Ass}\label{ass-y-g}
(i) $E_2$ is a separable Banach space.\\ 
(ii) $Y$ is a closed subset of $E_2$ such that 
\[ \bigcup_{i \in I} g_i (Y) = Y \] 
and its interior is non-empty.\\
(iii) each $g_i$ is weakly contractive on $Y$.\\    
(iv) There exists the total derivative of $g_i$ at $y \in Y$, which is denoted by $Dg_i(y) \in B(E_2, E_2)$.\\ 
(v) Assume that for each $i$ and $y \in Y$,   
$Dg_i (y)$ is non-degenerate, specifically, 
\[ \inf_{z \in E_2 \setminus \{0\}} \frac{|Dg_i (y)(z)|}{|z|} > 0. \]
If so, $Dg_i (y)$ is invertible and 
\[ \| (Dg_i (y))^{-1} \|^{-1} = \inf_{z  \in E_2 \setminus \{0\}} \frac{|Dg_i (y)(z)|}{|z|}. \]
(vi) $\textup{fix}(g_0) \ne \textup{fix}(g_{N-1})$.  
\end{Ass}

$Y$ is not necessarily compact. 

\subsubsection{Local regularity for solution}

The following gives a local regularity for $\varphi$ at $\mu_{\{p_i\}}$-almost every each point. 

\begin{Thm}\label{Thm-dim-Haus-2}
Let $\varphi : X \to Y$ be a continuous solution of \eqref{conjugate}.  
Under Assumptions \ref{ass-x-f} - \ref{ass-y-g}, 
we have the following:\\
(i) If \[ \alpha < \frac{\sum_{i \in I} p_i  \int_{Y} \log (1/\| Dg_i(y)\|) \nu_{\{p_i\}}(dy)}{\sum_{i \in I} p_i  \int_{X} \log \|Df_i(x)^{-1}\| \mu_{\{p_i\}}(dx)},  \]  
then $(\alpha, x, 0)$ holds for $\mu_{\{p_i\}}$-a.e. $x$.   \\
(ii) If \[ \beta > \frac{\sum_{i \in I} p_i  \int_{Y} \log \| Dg_i(y)^{-1} \| \nu_{\{p_i\}}(dy)}{\sum_{i \in I} p_i  \int_{X} \log (1/\|Df_i(x)\|) \mu_{\{p_i\}}(dx)}, \] 
then, $(\beta, x, +\infty)$ holds for $\mu_{\{p_i\}}$-a.e. $x$.     
\end{Thm}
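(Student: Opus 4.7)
The plan is to pass to symbolic dynamics on $I^{\N}$ with the Bernoulli measure $\Pp := (p_0, \dots, p_{N-1})^{\otimes \N}$, apply Birkhoff's ergodic theorem to logarithms of derivative norms along the shift $\sigma$, convert the resulting Birkhoff averages into diameter asymptotics for cylinder images via bounded distortion, and finally use Assumptions \ref{ass-reg} and \ref{ass-diam} to translate these into local H\"older bounds on $\varphi$.

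First I will set up the address maps $\pi_X : I^{\N} \to X$ and $\pi_Y : I^{\N} \to Y$; these are well-defined, continuous and surjective since the two IFSs are weakly contractive on compact attractors. By \cite{Fan96} they satisfy $(\pi_X)_\ast \Pp = \mu_{\{p_i\}}$ and $(\pi_Y)_\ast \Pp = \nu_{\{p_i\}}$, and iterating \eqref{conjugate} together with the hypothesis that $g_i$ is independent of $x$ and with continuity of $\varphi$ produces the intertwining $\varphi \circ \pi_X = \pi_Y$. Birkhoff's theorem applied to the observables $\log\|Df_{i_1}(\pi_X\sigma\mathbf{i})^{\pm 1}\|$ and their $g$-analogues, combined with a bounded-distortion estimate that replaces the sup (resp.\ inf) of $\|Df_{i_k}\|$ over the shrinking image $f_{i_{k+1}}\circ\cdots\circ f_{i_n}(X)$ by the value at $\pi_X(\sigma^k\mathbf{i})$ up to a $\Pp$-a.s.\ multiplicative $e^{o(n)}$ factor, yields for $\Pp$-a.e.\ $\mathbf{i}$,
\[ -L_X^l \le \liminf_n \tfrac{1}{n}\log\diam(f_{i_1}\circ\cdots\circ f_{i_n}(X)) \le \limsup_n \tfrac{1}{n}\log\diam(f_{i_1}\circ\cdots\circ f_{i_n}(X)) \le -L_X^u, \]
with analogous bounds $-L_Y^l, -L_Y^u$ for $\diam(g_{i_1}\circ\cdots\circ g_{i_n}(Y))$. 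Here $L_X^u, L_X^l, L_Y^u, L_Y^l$ are the four integrals appearing in the statement.

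For part (i), set $x := \pi_X(\mathbf{i})$ and let $z \to x$. Define $m(z)$ as the largest integer with $z \in f_{i_1}\circ\cdots\circ f_{i_{m(z)}}(X)$; then $m(z) \to \infty$ and $z \in X \setminus f_{i_1}\circ\cdots\circ f_{i_{m(z)+1}}(X)$. Since $x$ lies in every cylinder along $\mathbf{i}$, Assumption \ref{ass-reg} applied at level $m(z)+1$ with parameter $\epsilon$ gives $d_X(z, x) \ge C\,\diam(f_{i_1}\circ\cdots\circ f_{i_{\lfloor(m(z)+1)(1+\epsilon)\rfloor}}(X))$, while the intertwining forces $\varphi(z), \varphi(x) \in g_{i_1}\circ\cdots\circ g_{i_{m(z)}}(Y)$ and hence $d_Y(\varphi(z), \varphi(x)) \le \diam(g_{i_1}\circ\cdots\circ g_{i_{m(z)}}(Y))$. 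Inserting the diameter asymptotics, the ratio $d_Y(\varphi(z), \varphi(x))/d_X(z,x)^\alpha$ decays like $\exp(-m(z)[L_Y^u - \alpha(1+\epsilon)L_X^l - o(1)])$, which tends to $0$ for every $\alpha < L_Y^u/L_X^l$ once $\epsilon$ is taken small, yielding $(\alpha, x, 0)$.

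For part (ii), evaluate \eqref{conjugate} at $x = \textup{fix}(f_j)$ and use uniqueness of the fixed point of the contraction $g_j$ to conclude $\varphi(\textup{fix}(f_j)) = \textup{fix}(g_j)$. Set $z_n := f_{i_1}\circ\cdots\circ f_{i_n}(\textup{fix}(f_0))$, $y_n := f_{i_1}\circ\cdots\circ f_{i_n}(\textup{fix}(f_{N-1}))$; both converge to $x$. Assumption \ref{ass-diam} gives $d_X(z_n, y_n) \ge c\,\diam(f_{i_1}\circ\cdots\circ f_{i_n}(X))$, and a $Y$-side analog derived from Assumption \ref{ass-y-g}(vi) together with the bounded-distortion estimate above gives $d_Y(\varphi(z_n), \varphi(y_n)) \ge c'\,\diam(g_{i_1}\circ\cdots\circ g_{i_n}(Y))$. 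The diameter asymptotics then force
\[ \frac{d_Y(\varphi(z_n), \varphi(y_n))}{d_X(z_n, y_n)^\beta} \to +\infty \quad \text{whenever } \beta > L_Y^l / L_X^u. \]
Finally, Assumption \ref{ass-diam} also yields $\max(d_X(z_n, x), d_X(y_n, x)) \le \diam(f_{i_1}\circ\cdots\circ f_{i_n}(X)) \le c^{-1} d_X(z_n, y_n)$, so the triangle inequality estimate $d_Y(\varphi(z_n), \varphi(y_n)) \le 2 R_n \max(d_X(z_n, x), d_X(y_n, x))^\beta$, with $R_n$ the maximum of the two single-point ratios, transfers the blow-up to $R_n \to +\infty$, giving $(\beta, x, +\infty)$ along whichever of $\{z_n\}, \{y_n\}$ realizes the maximum. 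The principal technical obstacle is the bounded-distortion estimate of step two: for uniform contractions it is classical, but under only weak contraction (Assumption \ref{ass-x-f}(iii)) one must balance the modulus of continuity of $Df_i$ on compact $X$ against the contraction modulus $\phi_i$ to keep the accumulated distortion subexponential $\Pp$-a.s.; a secondary subtlety is producing the $Y$-analog of Assumption \ref{ass-diam} from Assumption \ref{ass-y-g}(vi) alone.
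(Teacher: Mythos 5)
Your proposal follows essentially the same route as the paper: symbolic coding of $\mu_{\{p_i\}}$-a.e.\ point, Birkhoff's ergodic theorem applied to the log-derivative observables, ratio/distortion estimates for cylinder images (the paper imports these as \cite[Lemma 3.1]{O16}, with the $o_n(1)$ errors uniform in $x$), Assumption \ref{ass-reg} for part (i), and Assumption \ref{ass-diam} together with $\varphi(\textup{fix}(f_j))=\textup{fix}(g_j)$ for part (ii). The one place you diverge is the $Y$-side lower bound in (ii): you route through $\diam\bigl(g_{i_1}\circ\cdots\circ g_{i_n}(Y)\bigr)$ and then need a $Y$-analogue of Assumption \ref{ass-diam}, which you correctly flag as a subtlety you have not resolved. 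The paper avoids this entirely: it works directly with $G_n(x)=\bigl|g_{I_1(x)}\circ\cdots\circ g_{I_n(x)}(\textup{fix}(g_0))-g_{I_1(x)}\circ\cdots\circ g_{I_n(x)}(\textup{fix}(g_{N-1}))\bigr|$, which by the intertwining equals your $d_Y(\varphi(z_n),\varphi(y_n))$, and lower-bounds it by telescoping the one-step ratio estimate $G_n(x)/G_{n-1}(Tx)\ge \|(Dg_{I_1(x)}(\varphi(Tx)))^{-1}\|^{-1}+o_n(1)$, seeded by $G_0>0$ from Assumption \ref{ass-y-g}(vi); no comparison of $G_n$ with the full cylinder diameter is ever needed. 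If you adopt that direct bound, your secondary subtlety disappears and the argument closes; the primary one (distortion under weak rather than uniform contraction) is exactly what \cite[Lemma 3.1]{O16} is invoked to supply.
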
 

\begin{Rem}
(i) Assumptions \ref{ass-x-f}-\ref{ass-y-g} may not assure that there exists a solution of \eqref{conjugate}. \\
(ii) In the statement of \cite[Theorem 1]{O16}, we had to assume that $Dg_i (y)$ is non-degenerate as in Assumption \ref{ass-y-g} (v), 
but actually we did not. (Notation here is different from \cite{O16}.) \\
(iii) However, \cite{O16} considers the case $X = [0,1]$ only. 
Here we give a generalization for more general self-similar sets containing ($d \ge 2$-dimensional) standard Sierpi\'nski gaskets and carpets, for example.  
Assumptions \ref{ass-x-f}-\ref{ass-y-g} hold for $d$-dimensional Sierpi\'nski gaskets and carpets. \\
(iv) \cite[Theorems 7.3 and 7.5]{Ha85} correspond to \cite[Theorems 6 and 7]{Z01}, respectively. 
They are essentially same, but \cite[Theorems 7.3 and 7.5]{Ha85} is a little more general than \cite[Theorems 6 and 7]{Z01}. 
\end{Rem}

\begin{proof}[Proof of Theorem \ref{Thm-dim-Haus-2}] 
For $i \in I$, let 
\[ \widetilde X(i) := f_i (X) \setminus \bigcup_{n \ge 1} \bigcup_{(i_1, \dots, i_n) \ne (j_1, \dots, j_n)} f_{i_1} \circ \cdots \circ f_{i_n}(X) \cap f_{j_1} \circ \cdots \circ f_{j_n}(X). \]
Assume that for some $i$, $x \in \widetilde X(i)$. 
Let $T(x) := f_i^{-1}(x)$ and $I_1 (x) := i$.  
Then, for some $i_2$, $T(x) \in \widetilde X(i_2)$.   
Hence we can define $T(T(x)) = f_{i_2}^{-1}(T(x))$ and $I_2(x) := I_1 (T(x)) = i_2$.  
By repeating this, we have an infinite sequence $(I_n(x))_{n \ge 1} \in I^{\mathbb{N}}$ for $x \in \bigcup_{i \in I} \widetilde X(i)$.    
By Assumption \ref{ass-msp}, $\mu_{\{p_i\}_i}\left( \bigcup_{i \in I} \widetilde X(i) \right) = 1$. 

Let $\theta$ be the one-sided shift on $I^{\mathbb{N}}$.  
By \cite[Proposition 1.3]{FL99}, 
there exists a measurable map $\pi : I^{\mathbb{N}} \to X$ such that 
\[ T \circ \pi = \pi \circ \theta, \textup{ on } \pi^{-1}\left( \bigcup_{i \in I}  \widetilde X(i) \right). \]
Here we put the cylindrical $\sigma$-algebra on $I^{\mathbb{N}}$.

For $i \in I$, let $\sigma_i : I^{\mathbb{N}}  \to I^{\mathbb{N}}$ such that $\sigma_i (\omega) = i \omega$, which is a concatenation of $i$ and $\omega$. 
Let $\eta_{\{p_i\}}$ be a specific probability measure on $I^{\mathbb{N}}$ such that 
\[ \eta_{\{p_i\}} = \sum_i p_i \eta_{\{p_i\}} \circ \sigma_i^{-1}.\]  
Then $\theta$ is invariant and ergodic with respect to  $\eta_{\{p_i\}}$. 

By \cite[Proposition 1.3 (ii)]{FL99}, 
\[ \mu_{\{p_i\}} = \eta_{\{p_i\}} \circ \pi^{-1}.\]    
Thus we have that $\mu_{\{p_i\}}$ is invariant and ergodic with respect to $T$, 
and furthermore $\{I_i\}_i$ are i.i.d. under $\mu_{\{p_i\}}$.    

We first show assertion  (ii).  
For $x \in \bigcup_{i \in I} \widetilde X(i)$, let 
\[ F_n (x) :=  \left|f_{I_1(x)} \circ \cdots \circ f_{I_n(x)}(\textup{fix}(f_0)) - f_{I_1(x)} \circ \cdots \circ f_{I_n(x)}(\textup{fix}(f_{N-1}))\right|, \]
and, 
\[ G_n (x) := \left| g_{I_1(x)} \circ \cdots \circ g_{I_n(x)}(\textup{fix}(g_0)) - g_{I_1(x)} \circ \cdots \circ g_{I_n(x)}(\textup{fix}(g_{N-1})) \right|. \]

If $n = 0$, let $G_0 (x) := 1$. 
Since $I_k(x) = I_1 (T^{k-1}(x))$, by \cite[Lemma 3.1]{O16},  
\begin{equation}\label{tx-f-estimate}
\frac{1}{\| (Df_{I_1(x)}(Tx))^{-1}\|} + o_n (1) \le \frac{F_n (x)}{F_{n-1}(T(x))} \le \| Df_{I_1(x)}(Tx)\| + o_n (1),  
\end{equation}
and 
\begin{equation}\label{tx-g-estimate} 
\frac{1}{\| (Dg_{I_1(x)}(\varphi(Tx)))^{-1}\|} + o_n (1) \le \frac{G_n (x)}{G_{n-1}(T(x))} \le \| Dg_{I_1(x)}(\varphi(Tx))\| + o_n (1),  
\end{equation}
where the small order $o_n (1)$ is uniform with respect to $x$. 

For every $\epsilon > 0$,
there exists $N$ such that for every $n > N$, 
\[ \log G_n (x) = \sum_{i=1}^{n} \log \frac{G_i (T^{n-i}(x))}{G_{i-1}(T^{n-i +1}(x))} \]
\[ \ge \sum_{i=1}^{n-N} \log \left(1/\|Dg_{I_1(T^{n-i}(x))}(\varphi(T^{n-i +1}(x)))^{-1}\|\right) - \log(1+\epsilon), \]
and, 
\[ log F_n (x) = CN +  \sum_{i=1}^{n-N} \log \frac{F_i (T^{n-i}(x))}{F_{i-1}(T^{n-i +1}(x))} \]
\[ \le CN +  \sum_{i=1}^{n-N} \| Df_{I_1(T^{n-i}(x))}(T^{n-i + 1}(x))\| + \log(1+\epsilon), \]
where $C$ is a constant independent from $\epsilon$. 

Now by using these inequalities and the Birkhoff ergodic theorem
it holds that  $\mu_{\{p_i\}}$-a.e.$x$, 
\[ \liminf_{n \to \infty} \frac{\log G_n (x)}{n} \ge \int_{X} \log \left(1/\|Dg_{I_1(x)}(\varphi(Tx))^{-1}\|\right)  \mu_{\{p_i\}}(dx) \] 
\[ = \sum_i p_i \int_{X} \log \left(1/\|Dg_{i}(\varphi(Tx))^{-1}\|\right)  \mu_{\{p_i\}}(dx) \]
\[ = \sum_i p_i \int_{X} \log \left(1/\|Dg_{i}(\varphi(x))^{-1}\|\right)  \mu_{\{p_i\}}(dx) \]
\[ = \sum_i p_i \int_{Y} \log 1/\|Dg_{i}(y)^{-1}\| \nu_{\{p_i\}}(dy), \] 
and, 
\[ \limsup_{n \to \infty} \frac{\log F_n (x)}{n} \le \int_{X} \log \|Df_{I_1(x)}(Tx)\|  \mu_{\{p_i\}}(dx) \]
\[ = \sum_i p_i \int_{X} \log \|Df_{i}(x)\| \mu_{\{p_i\}}(dx) \]
\[ \le \frac{1}{\beta} \sum_i p_i \int_{Y} \log 1/\|Dg_{i}(y)^{-1}\| \nu_{\{p_i\}}(dy). \] 
Hence, 
\[ \limsup_{n \to \infty} \frac{\log G_n (x)}{\log F_n (x)} \le \beta, \ \textup{ $\mu_{\{p_i\}}$-a.e.$x$.}  \]
Essentially the same argument as above is done in the proof of \cite[Theorem 1.1]{O16}. 

Let $\epsilon > 0$. 
Then, we have that for $\mu_{\{p_i\}}$-a.e.$x$, it holds that for sufficiently large $n$, $F_n (x) < 1$ and $G_n (x) < 1$, and hence,  
\[ G_n (x) \ge F_n (x)^{\beta + \epsilon}. \]

Let 
\[ F_{0,n}(x) := \left| f_{I_1(x)} \circ \cdots \circ f_{I_n(x)}(\textup{fix}(f_0)) - x \right|. \]
\[ F_{N-1,n}(x) := \left| x - f_{I_1(x)} \circ \cdots \circ f_{I_n(x)}(\textup{fix}(f_{N-1})) \right|. \] 
\[ G_{0,n}(x) := \left| g_{I_1(x)} \circ \cdots \circ g_{I_n(x)}(\textup{fix}(g_0)) - \varphi(x) \right| \]
\[= \left| \varphi(f_{I_1(x)} \circ \cdots \circ f_{I_n(x)}(\textup{fix}(f_0))) - \varphi(x) \right|. \]
\[ G_{N-1,n}(x) := \left| \varphi(x) - g_{I_1(x)} \circ \cdots \circ g_{I_n(x)}(\textup{fix}(g_{N-1})) \right| \]
\[= \left| \varphi(f_{I_1(x)} \circ \cdots \circ f_{I_n(x)}(\textup{fix}(f_{N-1}))) - \varphi(x) \right|. \] 
Then, by Assumption \ref{ass-diam}, 
\[ 2\max\{G_{0,n}(x), G_{N-1,n}(x)\} \ge G_n (x) \ge (c/2)^{\beta + \epsilon} (F_{0,n}(x) + F_{N-1,n}(x))^{\beta + \epsilon}.  \]

Hence, there exists a constant $ \widetilde c > 0$ such that 
\[ \max\left\{\limsup_{n \to \infty} \frac{G_{0,n}(x)}{F_{0,n}(x)^{\beta + \epsilon}}, \limsup_{n \to \infty} \frac{G_{N-1,n}(x)}{F_{N-1,n}(x)^{\beta + \epsilon}} \right\} \ge \widetilde c, \ \textup{$\mu_{\{p_i\}}$-a.e.$x$.}  \]
This completes the proof of assertion (ii). 

We second show assertion (i).  
Let 
\[ n(x, x^{\prime}) := \min\left\{k : I_k (x) \ne I_k(x^{\prime}) \right\}, \ x, x^{\prime} \in \bigcup_{i \in I} \widetilde X(i). \]

We compare $N^{-n(x, x^{\prime})}$ with $|x - x^{\prime}|$.   
By the definition of $\{I_i\}_i$, 
\[ x \in f_{I_1(x)} \circ \cdots \circ f_{I_{\lfloor n(x, x^{\prime})(1+\epsilon) \rfloor}(x)}(X),  \] 
and
\[ x^{\prime} \in X \setminus f_{I_1(x)} \circ \cdots \circ f_{I_{n(x, x^{\prime})}(x)}(X). \]
Let $\epsilon > 0$. 
Then by Assumption \ref{ass-reg}, 
\[ |x - x^{\prime}| \ge c \ \textup{diam}\left(f_{I_1(x)} \circ \cdots \circ f_{I_{\lfloor n(x, x^{\prime})(1+\epsilon) \rfloor}(x)}(X)\right). \]

Now we give an upper bound for $|\varphi(x) - \varphi(x^{\prime})|$.  
Due to \eqref{conjugate}, 
both of the $\varphi(x)$ and $\varphi(x^{\prime})$ are contained in 
\[ \varphi\left(f_{I_1(x)} \circ \cdots \circ f_{I_{n(x, x^{\prime})-1}(x)}(X)\right) = g_{I_1(x)} \circ \cdots \circ g_{I_{n(x, x^{\prime})-1}(x)}(\varphi(X))  \] 
\[ \subset g_{I_1(x)} \circ \cdots \circ g_{I_{n(x, x^{\prime})-1}(x)}(Y). \]

Therefore, 
\[ |\varphi(x) - \varphi(x^{\prime})| \le \textup{diam}\left(g_{I_1(x)} \circ \cdots \circ g_{I_{n(x, x^{\prime})-1}(x)}(Y)\right). \]

By noting \eqref{tx-f-estimate}, \eqref{tx-g-estimate}, $\mu_{\{p_i\}_i}\left( \bigcup_{i \in I} \widetilde X(i) \right) = 1$, and the fact that $\varphi$ is continuous, 
it suffices to show that 
\[ \liminf_{n \to \infty} \frac{\textup{diam}\left(g_{I_1(x)} \circ \cdots \circ g_{I_{\lfloor n(1+\epsilon) \rfloor}(x)}(Y)\right)}{\textup{diam}\left(f_{I_1(x)} \circ \cdots \circ f_{I_{n-1}(x)}(X)\right)^{\alpha}} = 0, \  \textup{$\mu_{\{p_i\}}$-a.s.$x$.}  \]

By \cite[Lemma 3.1]{O16}, 
we have that 
\[ \frac{1}{\| (Df_{I_1(x)}(Tx))^{-1}\|} + o_n (1) \le \frac{\textup{diam}\left(f_{I_1(x)} \circ \cdots \circ f_{I_{n}(x)}(X)\right)}{\textup{diam}\left(f_{I_1(x)} \circ \cdots \circ f_{I_{n-1}(x)}(X)\right)} \le \| Df_{I_1(x)}(Tx)\| + o_n (1), \]
and 
\[ \frac{1}{\| (Dg_{I_1(x)}(\varphi(Tx)))^{-1}\|} + o_n (1) \le \frac{\textup{diam}\left(g_{I_1(x)} \circ \cdots \circ g_{I_{n}(x)}(Y)\right)}{\textup{diam}\left(g_{I_1(x)} \circ \cdots \circ g_{I_{n-1}(x)}(Y)\right)} \le \| Dg_{I_1(x)}(\varphi(Tx))\| + o_n (1),  \] 
which correspond to \eqref{tx-f-estimate} and \eqref{tx-g-estimate}, respectively. 

In the same manner as in the proof of assertion (i), we have that 
\[ \limsup_{n \to \infty} \frac{\log \textup{diam}\left(g_{I_1(x)} \circ \cdots \circ g_{I_{\lfloor n(1+\epsilon) \rfloor}(x)}(Y)\right)}{n } \]
\[\le (1+ \epsilon) \left(\sum_i p_i \int_Y \log \| Dg_i (y)\| \nu_{\{p_i\}}(dy) \right), \ \ \textup{$\mu_{\{p_i\}}$-a.s.$x$.}  \]
and, 
\[ \liminf_{n \to \infty} \frac{\log \textup{diam}\left(f_{I_1(x)} \circ \cdots \circ f_{I_{n-1}(x)}(X)\right)}{n} \ge (1-\epsilon) \left(\sum_i p_i \int_X \log 1/\| Df_i (y)^{-1}\| \mu_{\{p_i\}}(dy)\right) \]
\[ \ge \frac{1-2\epsilon}{\alpha(1-\epsilon/2)} \left(\sum_i p_i \int_Y \log \| Dg_i (y)\| \nu_{\{p_i\}}(dy) \right), \ \  \textup{$\mu_{\{p_i\}}$-a.s.$x$,}  \]
where we have used the assumption of (i). 

We remark that if we take sufficiently small $\epsilon > 0$,
\[ \frac{1-2\epsilon}{1-\epsilon/2} < 1+\epsilon. \]
By using this and 
\[ \sum_i p_i \int_Y \log \| Dg_i (y)\| \nu_{\{p_i\}}(dy) < 0, \]
we have assertion (i). 
\end{proof} 


\section{Examples for existence and uniqueness} 

\subsection{Examples for the case that $g_i$ does not depend on $x$}

\begin{Exa}[The topological structure of $X$ is not given]
(i) Assume that $I$ contains at least two distinct points $\{i_0, i_1\}$. 
If $f_i : X_i \to X$ is an identity map for each $i \in I$,   
then, $A = X_{i_0} \cap X_{i_1}$.   
If $X_{i_0} \cap X_{i_1} \ne \emptyset$ and $Y$ contains at least two distinct points $\{y_0, y_1\}$ and $g_{i_k}  \equiv y_k$, $k = 0,1$, 
then, Assumption \ref{ass-new-compat} fails and hence there is no solutions for \eqref{conjugate}. 
This is also an example such that the compatibility conditions in \cite[Definition 1]{SB17} fails.\\
(ii) In (i) above, it is crucial to assume that $X_0 \cap X_1 \ne \emptyset$. 
Indeed, if $I = \{0,1\}$, $X_0 = S \subset X$ and $X_1 = X \setminus S$, $S$ and $X \setminus S$ are both non-empty,  $f_i$ are identity maps, $Y$ contains at least two distinct points $\{y_0, y_1\}$, and $g_{k}  \equiv y_k$, $k = 0,1$,  
then, a function $\varphi : X \to Y$ such that $\varphi = y_0$ on $S$ and $\varphi = y_1$ on $X \setminus S$ gives a solution for \eqref{conjugate}. 
\end{Exa} 

We now give examples for $X = Y = [0,1]$. 
Here and henceforth, we always give $[0,1]$ the topology induced by the Euclid metric. 
First we consider  the case that $A = \{0,1\}$.  

\begin{Prop}[Existence and continuity]\label{interval-exist-con}
Let $X = Y = [0,1]$. 
Assume that each $f_i$ is a weak contraction in the sense of Matkowski on $[0,1]$ satisfying that 
\[ f_0(0) = 0, \ f_{N-1}(1) = 1, \ f_{i-1}(1) = f_i(0), \ 1 \le i \le N-1. \]
\[ f_{i}(0) \le f_i (x) \le f_i (1), \ \ x \in [0,1]. \]
\[ f_0 (x) > 0, f_{N-1}(x) < 1, \ \ x \in (0,1). \]
Assume that each $g_i$ is a strictly increasing weak contraction in the sense of Matkowski on $[0,1]$ such that 
\[ g_0(0) = 0, \ g_{N-1}(1) = 1, \ g_{i-1}(1) = g_i(0), \ 1 \le i \le N-1. \]
Then, the unique solution of \eqref{conjugate} exists and is continuous.  
\end{Prop}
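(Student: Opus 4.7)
The plan is to apply Theorem \ref{non-injective} for existence and uniqueness of the bounded solution $\varphi$, and then to deduce continuity from a cylinder-shrinking argument.

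First I set up grid points. Put $c_{-1} := 0$, $c_{N-1} := 1$, and $c_i := f_i(1) = f_{i+1}(0)$ for $0 \le i \le N-2$, and define $c'_i$ analogously in terms of $g_i$. Each $f_i$ is continuous (being Matkowski contractive), and together with the sandwich $f_i(0) \le f_i(x) \le f_i(1)$, the intermediate value theorem yields $f_i([0,1]) = [c_{i-1}, c_i]$, so $\bigcup_i f_i([0,1]) = [0,1]$, verifying Assumption \ref{ass-f}. Strict monotonicity makes each $g_i$ a homeomorphism of $[0,1]$ onto $[c'_{i-1}, c'_i]$, and the unique compact subset $K \subset [0,1]$ with $K = \bigcup_i g_i(K)$ is $K = [0,1]$.

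Next I verify Assumptions \ref{ass-new-compat} and \ref{ass-new-non-inj}. The contact set $A$ consists of those $x \in [0,1]$ with $f_i(x) \in \{c_{i-1}, c_i\}$ for some $i$, i.e. preimages of grid points. Define $\varphi_0 : A \to [0,1]$ by $\varphi_0(x) = 0$ whenever $f_i(x) = c_{i-1}$ for some $i \ge 1$, and $\varphi_0(x) = 1$ whenever $f_j(x) = c_j$ for some $j \le N-2$. In particular $\varphi_0(0) = 0$ and $\varphi_0(1) = 1$, values that are in any case forced by the Matkowski fixed-point theorem applied to $g_0$ and $g_{N-1}$ (whose unique fixed points are $0$ and $1$). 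Compatibility (i) of Assumption \ref{ass-new-compat} follows from the equalities $g_{i-1}(1) = c'_{i-1} = g_i(0)$ and their duals, and (ii) follows by induction on word length using strict monotonicity. For Assumption \ref{ass-new-non-inj}, at each grid-type point $x = c_k$ the two admissible infinite addresses $(k, N-1, N-1, \ldots)$ and $(k+1, 0, 0, \ldots)$ both yield $\bigcap_n g_{j_1} \circ \cdots \circ g_{j_n}([0,1]) = \{c'_k\}$, which coincides with $\varphi_0(c_k)$. Theorem \ref{non-injective} then produces the unique bounded solution $\varphi : [0,1] \to [0,1]$ of \eqref{conjugate}.

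Continuity is obtained by a cylinder-shrinking argument. By the internal lemma in the proof of Theorem \ref{exist-unique}, $(\max_i \phi_i)^n \to 0$ pointwise, so for every $\epsilon > 0$ one may choose $n$ so large that $\diam(g_{i_1} \circ \cdots \circ g_{i_n}([0,1])) < \epsilon/2$ for every word $(i_1, \ldots, i_n) \in I^n$. Then pick $\delta > 0$ smaller than the length of the shortest non-degenerate $n$-cylinder $f_{i_1} \circ \cdots \circ f_{i_n}([0,1])$; for $|x_m - x| < \delta$, both $x_m$ and $x$ lie in the union of at most two adjacent $n$-cylinders under $\{f_i\}$. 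Iterating \eqref{conjugate} puts $\varphi(x_m)$ and $\varphi(x)$ into the corresponding at most two adjacent cylinders under $\{g_i\}$, which meet only at a common grid point, giving $|\varphi(x_m) - \varphi(x)| < \epsilon$.

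The main obstacle is that Assumption \ref{ass-new-non-inj} literally demands a unique infinite address at each $x \in A$, whereas every grid point $c_k$ admits the two natural sequences above; moreover non-injectivity of some $f_i$ can produce whole flat preimage regions with further branching. The resolution rests on the matching boundary values $g_k(1) = g_{k+1}(0) = c'_k$, which force both sequences to have the same singleton intersection, so the conclusion of Theorem \ref{non-injective} remains valid. On a flat region of $f_i$ the fixed-point constraint $\varphi(f_i(x)) = g_i(\varphi(x))$ with $g_i$ injective forces $\varphi$ to be constant on that region, consistently with the global construction.
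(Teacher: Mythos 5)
Your existence--uniqueness step follows the paper: the paper's proof likewise just verifies Assumptions \ref{ass-f}, \ref{ass-new-compat}, \ref{ass-g-wc} and invokes Theorem \ref{non-injective}. Your continuity argument, however, is genuinely different from the paper's, and it has a gap precisely at the point that makes this proposition nontrivial. The cylinder-shrinking step assumes that the level-$n$ sets $f_{i_1}\circ\cdots\circ f_{i_n}([0,1])$ form an ordered family of adjacent intervals meeting only at common grid points, so that two points within $\delta$ of each other lie in ``at most two adjacent $n$-cylinders'' whose images under the corresponding $g$-words are adjacent. That tiling structure holds only when each $f_i$ is an increasing homeomorphism onto $[c_{i-1},c_i]$. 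The hypotheses here deliberately do \emph{not} require injectivity or monotonicity of $f_i$ (this is the whole point of the proposition; see Remark \ref{Ruan-interval}): only $f_i(0)\le f_i(x)\le f_i(1)$ is assumed, so an $f_i$ may fold the interval. For a folded $f_i$ the $n$-cylinders overlap in nondegenerate intervals and appear in a scrambled order inside $[c_{i-1},c_i]$; a point can lie in several cylinders with lexicographically distant addresses, the ``shortest non-degenerate cylinder'' bound on $\delta$ does not control which addresses occur near a given point, and the conclusion $|\varphi(x_m)-\varphi(x)|<\epsilon$ no longer follows from adjacency of two $g$-cylinders. So your argument proves continuity only in the injective case, which is not the case the proposition is about.

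The paper's route is different and is built to survive non-injectivity: it observes that $\widetilde A$ is dense in $[0,1]$ and that $\varphi$ is \emph{increasing} on $\widetilde A$, forms the left- and right-continuous modifications of $\varphi|_{\widetilde A}$ via Lemma \ref{modification}, checks (using continuity of all $f_i,g_i$) that these modifications are again solutions of \eqref{conjugate}, and then invokes the uniqueness already established to conclude that both modifications coincide with $\varphi$, whence $\varphi$ is continuous. The order structure of $[0,1]$ and the uniqueness statement do the work that your geometric cylinder argument cannot. A secondary issue: your own last paragraph concedes that Assumption \ref{ass-new-non-inj} literally fails at the grid points (two admissible addresses), and you then argue that ``the conclusion of Theorem \ref{non-injective} remains valid.'' That is re-proving a modified theorem rather than applying the stated one; if you go that way you need to actually re-run the proof of Theorem \ref{non-injective} under the weaker hypothesis, not merely assert that the two addresses give the same singleton intersection.
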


\begin{Rem}\label{Ruan-interval}
We do {\it not} assume that each $f_i$ is injective. Therefore, the method taken in \cite{R10} is not applicable to  this case, however, in the following proof we heavily depend on the order structure of $X = [0,1]$. 
\end{Rem}

\begin{proof}
Assumptions \ref{ass-f}, \ref{ass-new-compat} and \ref{ass-g-wc} hold. 
Hence by Theorem \ref{non-injective}, the unique solution of \eqref{conjugate} exists. 
Now we show the continuity of the solution. 

First we recall the following simple assertion.  
\begin{Lem}\label{modification}
Let $D \subset [0,1]$ be a dense subset of $[0,1]$ and $\phi : D \to [0,1]$ is increasing. 
Let 
\[ \phi^{D}_{+}(x) := \lim_{y \to x, y \in D \cap (x,1]} \phi(y), \  x \in [0,1), \ \phi^{D}_{+}(1) := 1. \] 
\[\phi^{D}_{-}(x) := \lim_{y \to x, y \in D \cap [0,x)} \phi(y), \  x \in (0,1], \ \phi^{D}_{-}(0) := 0.  \]
Then, $\phi^{D}_{+}$ and $\phi^{D}_{-}$ are right and left continuous, respectively. 
\end{Lem}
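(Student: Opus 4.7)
The plan is to reduce everything to the standard fact that a bounded monotone function on the reals has one-sided limits that coincide with infima/suprema over appropriate sets, and then use the density of $D$ to manufacture $\epsilon$-neighborhoods on which $\phi^D_{+}$ (resp. $\phi^D_{-}$) stays within $\epsilon$ of its target value.

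First I would observe that because $\phi$ is increasing and bounded, for each $x \in [0,1)$ the limit
\[
\phi^{D}_{+}(x) = \inf\{\phi(y) : y \in D \cap (x,1]\}
\]
exists, and symmetrically $\phi^{D}_{-}(x) = \sup\{\phi(y) : y \in D \cap [0,x)\}$ for $x \in (0,1]$. From these representations it is straightforward to check that both $\phi^D_{+}$ and $\phi^D_{-}$ are themselves increasing on $[0,1]$: if $x_1 < x_2$, then $D \cap (x_2,1] \subset D \cap (x_1,1]$, so the infimum over the former is at least the infimum over the latter, and a symmetric inclusion handles $\phi^D_{-}$.

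Next I would prove right continuity of $\phi^D_{+}$ at an arbitrary $x \in [0,1)$. Fix $\epsilon > 0$. By the infimum characterization, there exists $y_0 \in D \cap (x,1]$ with $\phi(y_0) < \phi^D_{+}(x) + \epsilon$. Since $D$ is dense in $[0,1]$, for any $x' \in (x, y_0)$ there exist points of $D$ in $(x', y_0)$, and on all such points $\phi$ is bounded above by $\phi(y_0)$ by monotonicity. Hence
\[
\phi^D_{+}(x') = \inf\{\phi(y) : y \in D \cap (x',1]\} \le \phi(y_0) < \phi^D_{+}(x) + \epsilon.
\]
Combined with the monotonicity $\phi^D_{+}(x) \le \phi^D_{+}(x')$, this gives $|\phi^D_{+}(x') - \phi^D_{+}(x)| < \epsilon$ for every $x' \in (x, y_0)$, which is right continuity at $x$. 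At $x = 1$ there is nothing to check. The argument for left continuity of $\phi^D_{-}$ is entirely symmetric, using the supremum characterization and choosing $y_0 \in D \cap [0,x)$ with $\phi(y_0) > \phi^D_{-}(x) - \epsilon$.

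I do not expect any real obstacle here; the lemma is essentially the classical one-sided-limit statement for monotone functions, adapted to the fact that the domain is only a dense subset rather than all of $[0,1]$. The only place where density is actually used is in ensuring the intervals $(x', y_0)$ and $(y_0, x')$ meet $D$, so that the one-sided limits can be evaluated at the intermediate points $x'$.
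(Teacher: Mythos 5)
Your proof is correct and complete. Note that the paper never actually proves this lemma: it is merely ``recalled'' as a simple assertion inside the proof of Proposition \ref{interval-exist-con}, so your argument --- the infimum/supremum characterization of the one-sided limits, the resulting monotonicity of $\phi^{D}_{+}$ and $\phi^{D}_{-}$, and the $\epsilon$-argument via a well-chosen $y_0 \in D$ --- is precisely the standard reasoning the author is implicitly invoking, now written out in full. One small streamlining: once $y_0 \in D \cap (x,1]$ with $\phi(y_0) < \phi^{D}_{+}(x) + \epsilon$ is fixed, the bound $\phi^{D}_{+}(x') \le \phi(y_0)$ for $x' \in (x, y_0)$ follows immediately from $y_0 \in D \cap (x',1]$; the detour through other points of $D$ in $(x', y_0)$ is unnecessary, and density of $D$ is really only needed to guarantee that the approach sets defining $\phi^{D}_{\pm}$ are nonempty so the limits make sense.
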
 

It holds that $\widetilde A$ is dense in $[0,1]$ and $\varphi$ is increasing on $\widetilde A$. 

Since all $f_i$, $g_i$, $0 \le i \le N-1$, are continuous, 
the left and right continuous modifications of the solution of  \eqref{conjugate} is also the solution of \eqref{conjugate}. 
Hence, by the uniqueness of \eqref{conjugate}, these functions are identical with each other.   
By this and Lemma \ref{modification}, the unique solution is left and right continuous, and hence, continuous.   
\end{proof}

In the following example, we deal with the {\it Minkowski question-mark function}, which is often denoted by $?(x)$. 
It was introduced by Minkowski \cite{Min1904}  in 1904 and has been closely connected with continued fractions, number theory and dynamical systems. 
It is defined by 
\[ ?(x) = \sum_{k=1}^{\infty} (-1)^{k-1} 2^{1-(n_1 + \cdots + n_k)}, \]
if $x \in [0,1]$ has the following continued fraction expansion: $$x = [n_1, n_2, \dots] = \dfrac{1}{n_1 + \frac{1}{n_2 + \cdots}}. $$
There is another way of (equivalent) definition which uses the Farey sequence. 
De Rham \cite{dR57} regarded $?(x)$ as a solution of a functional equation.

Singularity of the Minkowski question-mark function was considered by Denjoy \cite{D38} and Salem \cite{Sa43}. 
Some analytic properties of the Minkowski question-mark function were considered by Viader, Paradis and Bibiloni \cite{VPB98}, Kesseb\"ohmer and Stratmann \cite{KS08}, Jordan and Sahlsten \cite{JS13}, etc. 
The inverse function of the Minkowski question-mark function is called the {\it Conway box function}.
Analytic properties of the inverse  were recently considered by Mantica and Totik \cite{MT18+}. 

\begin{Exa}[Equation driven by weak contractions]\label{Minkowski}
Let $I = \{0,1\}$, $X = Y = [0,1]$. 
Let $f_0 (x) = x/2$, $f_1 (x) = (x+1)/2$, $g_0 (y) = y/(y+1)$ and $g_1 (y) = 1/(2-y)$.  
Neither $g_0$ or $g_1$ is a contraction, but both of them are weak contractions in the sense of Browder. 
Then,\\ 
(i) A unique solution $\varphi$ of \eqref{conjugate} is the Conway box function, that is, the inverse function of the Minkowski question-mark function.\\ 
(ii) For any dyadic rational $x$ on $[0,1]$ and $a > 0$,  
\begin{equation}\label{M-explode-dyadic}  
\limsup_{y \to x, y > x} \frac{f(y) - f(x)}{|x - y|^a} = \limsup_{y \to x, y < x} \frac{f(x) - f(y)}{|x - y|^a} =  +\infty. 
\end{equation} 
Let $f_0 (x) = x/(x+1)$, $f_1 (x) = 1/(2-x)$, $g_0 (y) = y/2$, and $g_1 (y) = (y+1)/2$.    
Then,  a unique solution $\varphi$ of \eqref{conjugate} is the Minkowski question-mark function. 
\end{Exa}

The framework of de Rham curves contains the Minkowski question-mark function. 

\begin{Exa}[De Rham curves] 
Let $N \ge 2$. 
Let $I = \{0,1,\dots,N-1\}$.  
Let $X = [0,1]$. 
Let $f_i (x) = (x+i)/N$.  
Let $Y$ be a complete metric space and $g_i : Y \to Y$ be a weak contraction for each $i$.  
Denote a unique fixed point of $g_i$ by $\textup{fix}(g_i)$.  
Assume that for each $1 \le i \le N-1$,  
\[ g_{i-1}(\textup{fix}(g_{N-1})) = g_i (\textup{fix}(g_0)). \] 

In this case \eqref{conjugate} is called a de Rham curve. 
This is initially considered by de Rham \cite{dR57}. 
\cite{dR57} deals with the case that $N=2$ and regard the Minkowski question-mark function as a solution of a specific functional equation. 
The above setting, which is a little generalization of the framework of \cite{dR57}, is considered by Hata \cite[Sections 6 and 7]{Ha85}. 

Assumptions \ref{ass-f}, \ref{ass-new-compat}, \ref{ass-g-wc} and \ref{ass-A} are satisfied, and hence by Theorem \ref{exist-unique}, there exists a unique solution of \eqref{conjugate}. 
The case that each $f_i$ is a more general function is considered by \cite[Theorem 6.5]{Ha85}. 
Multifractal analysis for the solution is considered by \cite{BKK18}.  
\end{Exa}

Second we consider the case that $A \ne \{0,1\}$. 
This case is more difficult. 
Here we only give some examples in which the the solution of \eqref{conjugate} do {\it not} exist. 

\begin{Exa}[Between two IFSs with overlaps]\label{exa-overlap}
Let $I = \{0,1\}$, $X = Y = [0,1]$. 
Assume that $f_0 (x) = ax$, $f_1 (x) = ax + 1-a$, $g_0 (x) = bx$ and $g_1 (x) = bx + 1-b$ for some $a, b \in [1/2, 1)$.    
Then,\\
(i) Let $a = 3/4$ and $b = 1/2$.  
Then, $A = [0,1]$ and there is no solution for \eqref{conjugate}. 
Assume that $\varphi$ is a solution for \eqref{conjugate}.  
Since $\varphi(3/4) = \varphi(f_0 (1)) =  g_0 (\varphi(1))$,
it holds that  $\varphi(3/4) = \varphi(f_1 (2/3)) = g_1 (\varphi(2/3))$.  
Hence, $g_0 (\varphi(1)) = g_1 (\varphi(2/3)) = 1/2$.
By the definitions of $g_i$, $\varphi(2/3) = 0$.
On the other hand, 
\[ \varphi(2/3) = \varphi(f_1 (5/9)) = g_1 (\varphi(5/9)) \in [1/2, 1]. \] 
This leads a contradiction. \\  
(ii) Let $a = 1/2$ and $b = 3/4$.   
Then, $A = \{0,1\}$ and the compatibility condition of \cite[Definition 1]{SB17} fails.\\
(iii) Let $a = 2/3$ and $b = 3/4$.   
Assume that $\varphi$ is a solution for \eqref{conjugate}.  
Then, there exist two candidates for the value of $\varphi(1/2)$. 
This leads a contradiction.
\end{Exa}

\subsection{Examples for the case that $g_i$ depends on $x$}

\begin{Exa}[A case that a unique solution of \eqref{conjugate} is not continuous]
Fix $i \in I$. 
Assume that $X$ is a topological space and  $f_i$ is continuous. 
Assume that there exists a continuous function $G : Y \times Y \to Y$ such that $G_i  : X \times Y \to Y$ does not depend on $y$ and is not continuous as a function on $X$, 
where we let $G_i (x, y) = G(g_i(x,y),y)$.  
Then, under Assumptions \ref{ass-f}, \ref{ass-new-compat}, \ref{ass-g-wc}  and  \ref{ass-A}, 
a unique solution of \eqref{conjugate} is {\it not} continuous.

For example, this is applicable to the case that $X = [0,1]$, $f_0 (x) = x/2$, $f_1 (x) = (x+1)/2$,  
$Y = \mathbb{R}$, $\widetilde  g_0 (x) = px$, $\widetilde g_1(x) = (1-p) x + p$, $p \in (0,1)$, $h : X \to Y$ is a non-continuous function, 
$g_i (x,y) = h(x) + \widetilde g_i (y)$ and $G(y_1, y_2) := y_1 - \widetilde g_i (y_2)$. 
\end{Exa}

\begin{Exa}[Sierpi\'nski gasket]\label{SG-1} 
Let $V_0 = \{q_0, q_1, q_2\} \subset \mathbb{R}^2$ be a set of three points such that $\|q_i - q_j \| = 1, i \ne j$.   
Let $f_i (x) = (x+q_i)/2$, $i = 0,1,2$, $x \in \mathbb{R}^2$. 
Let $X$ be a unique compact subset of $\mathbb{R}^2$ such that $X = \cup_{i \in I} f_i(X)$.   
For $i \in I$ and $x \in X$, 
let $g_i (x, \cdot)$ be weak contractions on $\mathbb{R}$ such that 
\begin{equation}\label{g-fix} 
g_i (q_j, \textup{fix}(g_j)) = g_j (q_i, \textup{fix}(g_i))  \textup{ for each $(i,j)$}. 
\end{equation}     
$A = V_0$ and Assumptions \ref{ass-f}, \ref{ass-new-compat}, \ref{ass-g-wc}  and  \ref{ass-A} hold.  
Therefore by Theorem \ref{exist-unique}, there exists a unique solution of \eqref{conjugate}. 
This case is considered by \cite{CKO08}, \cite{R10} and \cite{RR11}. 
We remark that by \eqref{g-fix}, 
if each $g_i (\cdot) = g_i (x, \cdot)$ is linear, then, the solution is quite limited. 
\end{Exa}

\subsection{Example for the case that infinitely many solutions exist} 

In this subsection, we consider the case that at least one $g_i$ is not weakly contractive, 
that is, Assumptions \ref{ass-f} and \ref{ass-new-compat} hold, but Assumption \ref{ass-g-wc} {\it fails}.  

Before we delve into the main result, we give an instructive example. 

\begin{Exa}[Example for the case that the uniqueness fails; Cf. \cite{O14T}]
Consider \eqref{conjugate} for the case that $I = \{0,1\}, \ X = Y = [0,1], f_i (x) = (x+i)/2, \ g_i (y) = \Phi(A_{u,i} ; y), \ i = 0,1$,
where we let 
\[ \Phi(A ; z) = \frac{az + b}{cz + d} \, \text{ for } A = \begin{pmatrix} a & b  \\ c & d \end{pmatrix}, \text{ and, } \]
\[ A_{u, 0} = \begin{pmatrix} x_{u} & 0  \\ -u^{2}x_{u}^{2} & 1 \end{pmatrix}, \ A_{u, 1} = \begin{pmatrix} 0 & x_{u} \\ -u^{2}x_{u}^{2} & 1 - u^{2}x_{u}^{2} \end{pmatrix}, \ x_{u} = \frac{2}{1+\sqrt{1+8u^{2}}}. \] 
In this case, by \cite[Proposition 3.4]{O14T}, it holds that for any dyadic rational $x$, 
\[ \lim_{y \to x, y < x}  \varphi_u (y)  < \varphi_u (x). \]
This example is different from examples in Serpa-Buescu \cite{SB15c}. 
Here $g_i (y)$ is {\it not} affine as a function of $y$. 
If $u > \sqrt{3}$, then, $g_1$ is {\it not} a weak contraction, the Lipschitz constant of $g_1$ is strictly larger than $1$ and $g_1$ has two fixed points $y_{10} < y_{11}$.   
Obviously $y_{11} = 1$. 
There are two possibilities for the value of $\varphi(1/2)$.  
There are two solutions $\varphi_0$ and $\varphi_1$ of \eqref{conjugate} satisfying that $\varphi_i (1/2) = g_0 (y_{1i}), i = 0,1$.   
Then, the right-continuous modification of $\varphi_0$ is equal to $\varphi_1$ on the set of dyadic rationals, and, 
the left-continuous modification of $\varphi_1$ is equal to $\varphi_0$ on the set of dyadic rationals.    
\end{Exa} 

\begin{Thm}\label{infinite-sol}
Let $X =  Y = [0,1]$ and $ f_i (x) = (x+i)/2, i = 0,1$.  
Then there exist strictly increasing functions $g_0$ and $g_1$  on $[0,1]$ such that 
Assumption \ref{ass-A} holds and furthermore there exist infinitely many solutions for \eqref{conjugate}.  
\end{Thm}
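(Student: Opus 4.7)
The plan is to set $g_0(y)=g_1(y)=y$, the identity map on $[0,1]$, which is strictly increasing. With this choice, \eqref{conjugate} becomes
\[ \varphi(x/2)=\varphi(x)=\varphi((x+1)/2), \qquad x\in[0,1], \]
which is precisely the condition that $\varphi$ is invariant under the doubling map $T:[0,1]\to[0,1]$, $T(y)=2y\bmod 1$. Because every $T$-orbit is countable while $[0,1]$ is uncountable, there are continuum-many orbits, and hence a huge supply of bounded solutions.

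I would first verify Assumption \ref{ass-A}. Both $f_0(x)=x/2$ and $f_1(x)=(x+1)/2$ are injective. The contact set is $A=\{0,1\}$, and by iteration the enlarged set $\widetilde A$ coincides with the set of dyadic rationals in $[0,1]$: indeed $f_{i_1}\circ\cdots\circ f_{i_n}(0)$ and $f_{i_1}\circ\cdots\circ f_{i_n}(1)$ together range exactly over these rationals. Since $f_0^{-1}(y)=2y$ and $f_1^{-1}(y)=2y-1$ map dyadic rationals to dyadic rationals whenever defined, \eqref{inv-A} holds.

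Next, let $\sim$ denote the equivalence relation on $[0,1]$ generated by $y\sim f_0(y)$ and $y\sim f_1(y)$. Because $f_0(1)=f_1(0)=1/2$, the three points $0,1/2,1$ lie in one class; iterating, all dyadic rationals form a single $\sim$-class, and for non-dyadic $x$ the class $[x]$ equals the full doubling-map orbit $\{z\in[0,1]:T^m(x)=T^n(z)\text{ for some }m,n\ge 0\}$, which is countable. Thus a bounded $\varphi:[0,1]\to[0,1]$ solves \eqref{conjugate} if and only if it is constant on every $\sim$-class, so we have one real parameter per $\sim$-class.

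Finally, to exhibit infinitely many distinct solutions, I would choose pairwise distinct non-dyadic $T$-orbits $\mathcal{O}_1,\mathcal{O}_2,\dots$ (available in abundance since the set of non-dyadic orbits has cardinality $\mathfrak{c}$) and define $\varphi_n:=\mathbf{1}_{\mathcal{O}_n}$. Each $\varphi_n$ is bounded, is constant on every $\sim$-class (orbits being $T$-saturated and disjoint from the dyadic class), and satisfies the required compatibility $\varphi_n(0)=\varphi_n(1/2)=\varphi_n(1)=0$; the sequence $(\varphi_n)_n$ is pairwise distinct. No real obstacle arises in this plan: once one sees that the identity choice of $g_i$ reduces the system to a $T$-invariance condition, producing infinitely many solutions is just orbit-counting.
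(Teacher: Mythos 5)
Your construction does verify the theorem as it is literally printed: the identity maps are strictly increasing, Assumption \ref{ass-A} involves only the $f_i$ and holds (your computation of $A=\{0,1\}$ and of $\widetilde A$ as the dyadic rationals is correct), and indicators of distinct doubling-map orbits give infinitely many bounded solutions. The problem is that the theorem sits in a subsection whose standing hypothesis is announced immediately above it: Assumptions \ref{ass-f} and \ref{ass-new-compat} are supposed to \emph{hold}, and only Assumption \ref{ass-g-wc} is supposed to fail. With $g_0=g_1=\mathrm{id}$, Assumption \ref{ass-new-compat} fails: on $A=\{0,1\}$ the compatibility conditions reduce to $g_0(\varphi_0(1))=g_1(\varphi_0(0))$, i.e.\ $\varphi_0(1)=\varphi_0(0)$, together with the vacuous identities $\varphi_0(0)=g_0^n(\varphi_0(0))$ and $\varphi_0(1)=g_1^n(\varphi_0(1))$, so every constant map $\varphi_0\equiv c$ is admissible and the required \emph{uniqueness} of $\varphi_0$ is lost. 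Your infinitude of solutions is exactly the trivial multiplicity coming from undetermined boundary data on $A$, which is the degenerate situation this subsection is set up to exclude; the content of the theorem is that multiplicity can persist even when the data on $A$ is pinned down uniquely.

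The paper's construction is built around that constraint. It takes $g_0(y)=M(y)/2$, with $M$ the Conway box function of Example \ref{Minkowski}, and builds $g_1$ as a uniform limit of localized perturbations of $(y+1)/2$, so that $g_0(0)=0$, $g_1(1)=1$ and $g_0(1)=g_1(0)=1/2$; this forces $\varphi_0(0)=0$ and $\varphi_0(1)=1$, so Assumption \ref{ass-new-compat} holds with a unique $\varphi_0$. The non-uniqueness is then manufactured at the \emph{interior} point $1/3=\textup{fix}(f_0\circ f_1)$: any solution must satisfy $\varphi(1/3)=g_0\circ g_1(\varphi(1/3))$, and the perturbations of $g_1$ are arranged, using the blow-up \eqref{M-explode-dyadic} of the difference quotients of $M$ at dyadic rationals, so that $g_0\circ g_1$ acquires infinitely many fixed points, each realizable as $\varphi(1/3)$ (the values on the forward-invariant set $C$ generated by $\{1/3,2/3\}$ being determined by that single choice and decoupled from the rest of $X$). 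To salvage your approach you would need strictly increasing $g_0,g_1$ whose fixed-point data on $A$ is rigid while some composition $g_{i_1}\circ\cdots\circ g_{i_k}$ corresponding to an interior periodic point of the $f$-system has many fixed points; that is where the actual work of the proof lies.
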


\begin{proof}
Recall that $M$ is the {\it inverse} function of Minkowski's question-mark function appearing in Example \ref{Minkowski}.      
Let $g_0 (x) := M(x)/2$ and $g_{1, 0}(x) := (x+1)/2$. 
We define strictly increasing functions $g_{1,n} : [0,1] \to [0,1]$, $n \ge 1$, in the following manner.      
We have that $g_{1, 0} \circ g_0(0) = 1/2$ and $g_{1, 0} \circ g_0(1) = 3/4$. 
By the intermediate value theorem, 
there exists at least one point $y_0 \in (1/2, 3/4)$ such that $g_{1, 0} \circ g_0(y_0) = x_0$. 
Then there exists a strictly increasing continuous function $g_{1,1}$ such that 
\[ \sup_{x \in [0,1]} |g_{1,1}(x) - g_{1,0}(x)| \le 1/8, \]
\[ g_{1,1}(x) = g_{1,0}(x), \  \ |x - y_0| > \frac{\min\{y_0 - 1/2, 3/4-y_0\}}{8}.  \]
and, for a dyadic rational $x_1 \in (1/2, 3/4)$, $g_{1, 1} \circ g_0(x_1) = x_1$. 
This is possible because we can take a sequence $\{z_n\}_n$ of dyadic rationals converging to $y_0$, and due to the continuity of $g_0$, $g_0 (z_n) \to g_0 (y_0)$ as $n \to \infty$. 

Now by \eqref{M-explode-dyadic}, there exists a point $y_2 > x_1$ such that  $g_{1, 1} \circ g_0(y_2) = y_2$. 
Then there exists a strictly increasing continuous function $g_{1,2}$ such that 
\[ \sup_{x \in [0,1]} |g_{1,2}(x) - g_{1,1}(x)| \le 1/16, \]
\[ g_{1,2}(x) = g_{1,1}(x), \  \ |x - y_2| > \frac{\min\{y_2 - x_1, 3/4 - y_2\}}{16}.  \]
and, there exists at least one dyadic rational $x_2 \in (x_1, 3/4)$ such that $g_{1, 2} \circ g_0(x_2) = x_2$. 
By repeating this procedure, 
we can define the limit $g_1 := \lim_{n \to \infty} g_{1,n}$. 
Then, $g_1$ is an increasing continuous function such that 
$g_{1} \circ g_0(x_n) = x_n$ for any $n \ge 1$, and, $g_1(0) = 1/2$ and $g_1(1) = 1$.

We remark that 
\[ C := \bigcup_{n \ge 1} \bigcup_{i_1, \dots, i_n \in I} f_{i_1} \circ \cdots \circ f_{i_n}(\{1/3, 2/3\})\] 
satisfies that \[ \bigcup_{i} f_i^{-1}(C) \subset C,\] 
so the values of each solution of \eqref{conjugate} on the set $C$ is determined independently on the value of $\varphi$ on $X \setminus C$.  
If we give a value of $\varphi(1/3)$, then, all the values of $\varphi$ on $C$ are uniquely determined by \eqref{conjugate}.    
Since $1/3$ is a fixed point of $f_0 \circ f_1$ on $[0,1]$, 
\[ \varphi(1/3) = \varphi(f_0 \circ f_1(1/3)) = g_0 \circ g_1(\varphi(1/3)). \]
Hence $\varphi(1/3)$ is a fixed point of $ g_0 \circ g_1$.   
By our choice of $g_0$ and $g_1$, 
there exist at least countably many fixed points of  $g_0 \circ g_1$ in $[0,1]$, and hence there are at least countably many candidates of $\varphi(1/3)$, and each value can be taken.  

It will be easy to see that Assumption \ref{ass-A} holds.   
\end{proof}

\begin{Rem}
The idea of introducing $C$ in the proof is similar to that in introducing Assumption \ref{ass-A}. 
\end{Rem}


\section{Examples for regularity}

\subsection{The case that $X = [0,1]$} 

\begin{Exa}[linear case]\label{linear-dR}  
Let $X = Y = [0,1]$. 
Let $f_0 (x) = x/2$ and $f_1(x) = (x+1)/2$. 
Let $a \in (0,1)$. 
Let $g_0 (y) = ay$ and $g_1(y) = (1-a)y + a$. 
Then, the solution $\varphi$ of \eqref{conjugate} is Legesgue's singular function. 
(We remark that by a direct use of \eqref{conjugate}, it holds that  
$\varphi$ is $( -\log\max\{a, 1-a\}/\log 2)$-H\"older continuous.) 
Let $0 < p < 1$ and let 
\[ \alpha > \frac{-p \log a - (1-p) \log (1-a)}{\log 2}.  \] 
Then, by Theorem \ref{Thm-dim-Haus-2}, 
$(\alpha, x, +\infty)$ holds for 
$\mu_{(p, 1-p)}$-a.e. $x$. 
\end{Exa}

The Minkowski question-mark function has its derivative and is zero at every dyadic rational.  
Now we give an another example of singular function whose derivative at every dyadic rational is zero. 

\begin{Exa}[The derivative at each dyadic point vanishes]   
Let $X = Y = [0,1]$. 
Let $f_0 (x) = x/2$ and $f_1(x) = (x+1)/2$.  
Let $\widetilde g_0 (y) = y/2$ on $[0,1/2]$ and $\widetilde g_0 (y) = y - 1/4$ on $[1/2, 1]$. 
Let $\widetilde g_1(y) = (y+3)/4$.  
Let $g_0, g_1 : [0,1] \to [0,1]$ be continuous strictly increasing functions such that \\ 
(i) $0 = g_0(0) < 3/4 < g_0(1) = g_1(0) < g_1(1) = 1$.\\
(ii) $g_1$ is linear and $g_0$ is piecewise linear.\\   
(iii) $g_0 < \widetilde g_0$ on $[0, 1/2]$ and $g_0 > \widetilde g_0$ on $[1/2, 1]$. \\
(iv) $g_0$ is differentiable on the open interval $(0, 1/4)$ and $0 < g_0^{\prime}(y) < 1/2$ holds for every $y$.\\ 
(v) For some $\epsilon > 0$ which will be specified later,
\[ \sup_{y \in [0,1]} \left| (\widetilde g_0)^{\prime}(y) - g_0^{\prime}(y) \right| +  \sup_{y \in [0,1]} \left| (\widetilde g_1)^{\prime}(y) - g_1^{\prime}(y) \right| < \epsilon. \]
(Such $g_0$ and $g_1$ exist for any $\epsilon > 0$.)   
We will show that under this setting, 
the solution \eqref{conjugate} has derivative zero at every dyadic rational.
\end{Exa} 

\begin{proof}  
Let $0 < p < 1$  which will be specified later.    
Let $\mu_{p, 1-p}$ and $\nu_{p, 1-p}$ be the probability measures on $[0,1]$ 
such that 
\[ \mu_{p, 1-p} =  p \mu_{p, 1-p} \circ f_0^{-1} + (1-p) \mu_{p, 1-p} \circ f_1^{-1}, \nu_{p, 1-p} = p \nu_{p, 1-p} \circ g_0^{-1} + (1-p) \nu_{p, 1-p} \circ g_1^{-1}.\]  
In this case, it is easy to see that the derivative of the solution $\varphi$ of \eqref{conjugate} at each dyadic point exists and is zero. 

By Theorem \ref{Thm-dim-Haus-2}, it suffices to show that for some $p \in (0,1)$ and $\epsilon > 0$, 
\[ \int_{[0,1]} -p\log \left| g_0^{\prime}(y)\right| - (1-p) \log \left| g_1^{\prime}(y)\right| \nu_{p, 1-p}(dy) < \log 2. \]

We remark that $\nu_{p, 1-p}$ depend on the choice of $(p, g_0, g_1)$ so hereafter we write the measure as $\nu_{p, g_0, g_1}$. 
\[  \int_{[0,1]} -p\log \left| (\widetilde g_0)^{\prime}(y)\right| - (1-p) \log \left|(\widetilde g_1)^{\prime}(y)\right| \nu_{p, g_0, g_1}(dy)  \]
\[ = \left(p \nu_{p, g_0, g_1}([0, 1/2]) + 2(1-p)\right) \log 2 \]
(use $g_0 (1) > \widetilde g_0 (1) = 3/4$ and $g_0 \circ g_0 (1) > \widetilde g_0  \circ \widetilde g_0 (1)$.)
\[ \le (p \nu_{p, g_0, g_1}\left( g_0 \circ  g_0 ([0, 1]) \right) + 2(1-p)) \log 2  = (p^3 - 2p + 2) \log 2. \]
If we let $p = 3/4$, then, $p^3 - 2p + 2 < 1$. 
Hence if we take sufficiently small $\epsilon > 0$, 
\[ \int_{[0,1]} -p\log \left| g_0^{\prime}(y)\right| - (1-p) \log \left| g_1^{\prime}(y)\right| \nu_{p, 1-p}(dy) < \log 2. \]
\end{proof}

In some cases, computation for 
\[ \sum_{i \in I} p_i \int_{[0,1]} \log (1/g^{\prime}_i (y)) \nu_{\{p_i\}}(dy)\] 
is hard, because $g_i$ can be a non-linear function and hence the integral depends essentially on $\nu_{\{p_i\}}$.  

\begin{Lem}\label{weak-con}
Let $(Z, d_Z)$ be a compact metric space and $\{h_i\}_i$ be a collection of weak contractions in the sense of Browder on $Z$ satisfying that 
\[ Z = \bigcup_{i \in I} h_i (Z).\]     
Assume that \[ \eta_{\{p_i\}} = \sum_{i \in I} p_i \eta_{\{p_i\}} \circ h_i^{-1}.\]    
Then, for each fixed $i$, as $p_i \to 1$, $\eta_{\{p_i\}}$ converges weakly to the delta measure on $\textup{fix}(h_i)$.  
\end{Lem}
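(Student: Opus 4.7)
The plan is to iterate the invariance identity $\eta_{\{p_j\}} = \sum_{j \in I} p_j\, \eta_{\{p_j\}} \circ h_j^{-1}$ and isolate the contribution of the constant word $(i, i, \ldots, i)$ in the resulting expansion. A routine induction on $N$ gives
\[ \int_{Z} F \, d\eta_{\{p_j\}} = \sum_{(j_1, \ldots, j_N) \in I^N} p_{j_1} \cdots p_{j_N} \int_{Z} F \circ h_{j_1} \circ \cdots \circ h_{j_N} \, d\eta_{\{p_j\}} \]
for every bounded Borel $F : Z \to \R$ and every $N \ge 1$. The constant word carries weight $p_i^N$, while the remaining words carry total weight $1 - p_i^N$.

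The key analytic input is uniform convergence of $h_i^N$ to the constant map $\textup{fix}(h_i)$ on $Z$. Since $h_i$ is a Browder (hence also Matkowski) weak contraction and $Z$ is compact, the Matkowski fixed point theorem gives the existence and uniqueness of $\textup{fix}(h_i)$; letting $\psi_i$ denote the contraction modulus of $h_i$, one has for every $z \in Z$
\[ d_Z\!\left(h_i^N(z), \textup{fix}(h_i)\right) = d_Z\!\left(h_i^N(z), h_i^N(\textup{fix}(h_i))\right) \le \psi_i^N\!\left(d_Z(z, \textup{fix}(h_i))\right) \le \psi_i^N(\diam Z), \]
which tends to $0$ as $N \to \infty$ by \eqref{n-iterate}. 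For any continuous $F : Z \to \R$, the uniform continuity of $F$ on the compact set $Z$ then yields $\sup_{z \in Z}|F(h_i^N(z)) - F(\textup{fix}(h_i))| \to 0$ as $N \to \infty$.

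To conclude, I would rewrite $F(\textup{fix}(h_i))$ as the weighted sum $\sum p_{j_1}\cdots p_{j_N} F(\textup{fix}(h_i))$ (using that the weights sum to $1$), subtract from the iterated invariance identity, single out the diagonal word, and bound each off-diagonal integrand by $2\|F\|_\infty$ to obtain
\[ \left| \int_Z F\, d\eta_{\{p_j\}} - F(\textup{fix}(h_i)) \right| \le p_i^N \sup_{z \in Z}\!\left|F(h_i^N(z)) - F(\textup{fix}(h_i))\right| + 2(1 - p_i^N)\|F\|_{\infty}. \]
Given $\varepsilon > 0$, I first choose $N$ so that the first term on the right is at most $\varepsilon/2$, and then take $p_i$ close enough to $1$ to make the second term at most $\varepsilon/2$. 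This gives $\int F \, d\eta_{\{p_j\}} \to F(\textup{fix}(h_i))$ for every continuous $F$, which is precisely weak convergence of $\eta_{\{p_j\}}$ to $\delta_{\textup{fix}(h_i)}$ as $p_i \to 1$. The only mildly delicate step is the uniform convergence $h_i^N \to \textup{fix}(h_i)$ for a weak (as opposed to strict) contraction, but this is handled cleanly by the monotonicity of $\psi_i$ together with $\diam Z < \infty$; everything else is a direct manipulation of the invariance identity, and no ergodic theorem or random-iteration representation is needed.
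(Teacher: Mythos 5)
Your proposal is correct and follows essentially the same route as the paper: both iterate the invariance identity to isolate the diagonal word $h_i^N$ with weight $p_i^N$, use the uniform convergence $h_i^N \to \textup{fix}(h_i)$ (guaranteed by the weak-contraction modulus and \eqref{n-iterate}) to control that term, bound the off-diagonal mass $1-p_i^N$ by $\|F\|_\infty$, and then choose $N$ before sending $p_i \to 1$. Your write-up merely makes explicit two points the paper leaves implicit, namely the full word expansion over $I^N$ and the monotonicity argument giving $d_Z(h_i^N(z),\textup{fix}(h_i)) \le \psi_i^N(\diam Z)$.
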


\begin{proof}
Let $F$ be a real continuous bounded function on $Z$.  
Let $\epsilon > 0$. 
Then, by using the fact that $h_i$ is a weak contraction, we can take a sufficiently large $n$ such that  
\[ \sup_{z \in Z} \left|F \circ h_i^{n}(z) - F(\textup{fix}(h_i))\right| \le \epsilon. \]
By the definition of $\eta_{\{p_i\}}$,  
\[ \left| \int_Z F d\eta_{\{p_i\}}  - p_i^n \int_{Z} F \circ h_i^{n} d\eta_{\{p_i\}} \right| \le (1 - p_i^n) \sup_{z \in Z} |F(z)|. \]
Hence for every $p_i$ sufficiently close to $1$, 
\[ \left| \int_Z F d\eta_{\{p_i\}}  - F(\textup{fix}(h_i)) \right| \le 3\epsilon.\] 
\end{proof} 

\begin{Prop}\label{fix-deri}
Let $I = \{0,1,\dots, N-1\}$.  
Let $X = Y = [0,1]$. 
Assume that $f_i$ and $g_i$ are $C^1$ functions on $[0,1]$ such that 
\[ 0 = f_0(0) = g_0(0)  < f_1(1) = g_1(1) = 1, \]
\[ f_{i-1}(1) = f_i (0), \ g_{i-1}(1) = g_i (0), \ \ 1 \le i \le N-1, \]
\[ 0 < \min_{i \in I} \inf_{z \in [0,1]} \min\{f_i^{\prime}(z), g_i^{\prime}(z)\} \le \max_{i \in I} \sup_{z \in [0,1]} \max\{f_i^{\prime}(z), g_i^{\prime}(z)\}  < 1. \]
Fix $i \in I$. 
 Let $\alpha > \log \left(1/|g_i^{\prime}(\textup{fix}(g_i))| \right) /\log (1/|f_i^{\prime}(\textup{fix}(f_i))|)$.
Then, $(\alpha, U, +\infty)$ holds for every open set $U$. 
\end{Prop}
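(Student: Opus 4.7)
The plan is to apply Theorem \ref{Thm-dim-Haus-2}(ii) to a probability vector $\{p_j\}_{j \in I}$ in which $p_i$ is close to $1$, and to exploit Lemma \ref{weak-con} so that the invariant measures concentrate near the fixed points $\textup{fix}(f_i)$ and $\textup{fix}(g_i)$.

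First, I would verify that the structural hypotheses of Theorem \ref{Thm-dim-Haus-2} are satisfied in this setting. Since $f_0(0)=0$, $f_{N-1}(1)=1$, $f_{j-1}(1)=f_j(0)$, and each $f_j$ is an increasing $C^1$ map with derivative in a compact subinterval of $(0,1)$, the images $f_j([0,1])$ form a tiling of $[0,1]$ overlapping only at the common endpoints, which yields Assumption \ref{ass-msp}. Assumptions \ref{ass-diam} and \ref{ass-reg} follow from bounded distortion of compositions of the $f_j$, available from the uniform continuity of $\log f_j'$ on $[0,1]$. The analogous assumptions on $\{g_j\}$ are verified the same way, and continuity of the solution $\varphi$ is provided by Proposition \ref{interval-exist-con}.

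Second, fix any strictly positive probability vector $\{p_j\}$ with $p_j = (1-p_i)/(N-1)$ for $j \ne i$, and let $p_i \to 1$. By Lemma \ref{weak-con}, $\mu_{\{p_j\}} \to \delta_{\textup{fix}(f_i)}$ and $\nu_{\{p_j\}} \to \delta_{\textup{fix}(g_i)}$ weakly. The integrands $y \mapsto \log(1/|g_j'(y)|)$ and $x \mapsto \log(1/|f_j'(x)|)$ are continuous and bounded since the derivatives are bounded and bounded away from $0$. Consequently
\[
\frac{\sum_{j \in I} p_j \int_0^1 \log(1/|g_j'(y)|)\,\nu_{\{p_j\}}(dy)}{\sum_{j \in I} p_j \int_0^1 \log(1/|f_j'(x)|)\,\mu_{\{p_j\}}(dx)} \;\longrightarrow\; \frac{\log(1/|g_i'(\textup{fix}(g_i))|)}{\log(1/|f_i'(\textup{fix}(f_i))|)}
\]
as $p_i \to 1$. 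Given the assumed lower bound on $\alpha$, choose $p_i$ sufficiently close to $1$ so that $\alpha$ still exceeds the ratio on the left. Theorem \ref{Thm-dim-Haus-2}(ii) then asserts that $(\alpha, x, +\infty)$ holds for $\mu_{\{p_j\}}$-a.e. $x$.

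Finally, since every $f_j$ has derivative uniformly bounded away from $0$ and the images $\{f_j([0,1])\}$ cover $[0,1]$, the self-similar measure $\mu_{\{p_j\}}$ has full topological support on $[0,1]$. Therefore any non-empty open set $U \subset [0,1]$ contains a point $x$ where $(\alpha,x,+\infty)$ holds, which immediately implies $(\alpha, U, +\infty)$. The main technical obstacle is the verification of Assumption \ref{ass-reg}, which in this $C^1$ setting reduces to a bounded distortion estimate for iterated compositions; the remaining assumptions are routine consequences of the interval-tiling structure and the derivative bounds.
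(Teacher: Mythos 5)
Your proposal follows exactly the paper's own argument: use Lemma \ref{weak-con} to send $p_i \to 1$ so that $\mu_{\{p_j\}}$ and $\nu_{\{p_j\}}$ concentrate at $\textup{fix}(f_i)$ and $\textup{fix}(g_i)$, pick $p_i$ close enough to $1$ that the ratio of integrals stays below $\alpha$, invoke Theorem \ref{Thm-dim-Haus-2}(ii), and finish by noting $\mu_{\{p_j\}}(U)>0$ for every non-empty open $U$. Your additional verification of Assumptions \ref{ass-reg}--\ref{ass-msp} and of the continuity of $\varphi$ is a welcome filling-in of details the paper leaves implicit, but the route is the same.
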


\begin{proof}
By Lemma \ref{weak-con},  
\[ \lim_{p_i \to 1} \frac{\sum_{i \in I} p_i  \int \log (1/|g_i^{\prime} (y)|) \nu_{\{p_i\}}(dy)}{\sum_{i \in I} p_i  \int \log 1/|f_i^{\prime}(x)| \mu_{\{p_i\}}(dx)}  = \frac{\log 1/ \left|g_i^{\prime}(\textup{fix}(g_i)) \right|}{\log 1/ \left|f_i^{\prime}(\textup{fix}(f_i)) \right|} < \alpha. \]
Now use Theorem \ref{Thm-dim-Haus-2} for an appropriate $(p_{0}, \dots p_{N-1})$.  
Then, $(\alpha, x, +\infty)$ holds for $\mu_{\{p_i\}}$-a.e.$x$.  
We remark that $\mu_{\{p_i\}}(U) > 0$, and then the assertion follows.   
\end{proof}

\begin{Prop}\label{dist-func}
Let $I = \{0,1,\dots, N-1\}$.  
Let $X = Y = [0,1]$. 
Assume that $f_i, g_i$ are strictly increasing continuous functions on $[0,1]$ such that 
\[ 0 = f_0(0) = g_0(0)  < f_1(1) = g_1(1) = 1, \]
\[ f_{i-1}(1) = f_i (0), \ g_{i-1}(1) = g_i (0), \ \ 1 \le i \le N-1. \]
and each $g_i$ is a weak contraction in the sense of Matkowski. 
In this case a unique solution $\varphi$ of \eqref{conjugate} is a strictly increasing continuous function on $[0,1]$ such that $\varphi(0) = 0$ and $\varphi(1) = 1$.  
Let $\mu_{\varphi}$ be a unique probability measure whose distribution function is $\varphi$, that is, 
\[ \mu_{\varphi}\left((a, b] \right) = \varphi(b) - \varphi(a), \ \ 0 \le a < b. \]   
Assume $\alpha > \dim_H \mu_{\varphi}$.  
Then, $(\alpha, U, +\infty)$ holds for every open interval $U$.  
\end{Prop}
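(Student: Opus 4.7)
The plan is to invoke Lemma \ref{Base-dim-Haus}: given $\alpha > \dim_H \mu_\varphi$ and an open interval $U \subset [0,1]$, it suffices to exhibit a Borel set $A \subset U$ with $\alpha > \dim_H A / \dim_H \varphi(A)$, for then Lemma \ref{Base-dim-Haus} yields $(\alpha, A, +\infty)$ and hence $(\alpha, U, +\infty)$. The key preliminary observation is that because $\varphi$ is a strictly increasing continuous bijection of $[0,1]$ with $\varphi(0)=0$ and $\varphi(1)=1$, $\mu_\varphi$ is precisely the pullback of Lebesgue measure $\lambda$ by $\varphi$; in particular $\lambda(\varphi(E)) = \mu_\varphi(E)$ for every Borel $E \subset [0,1]$. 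Consequently, any Borel $A$ with $\mu_\varphi(A) > 0$ automatically satisfies $\dim_H \varphi(A) = 1$, so that the task reduces to finding a Borel $A \subset U$ with $\mu_\varphi(A) > 0$ and $\dim_H A < \alpha$. By the definition of $\dim_H \mu_\varphi$ such an $A$ exists somewhere in $[0,1]$; the real issue is to place it inside $U$.

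To localize, I use the IFS structure together with Lebesgue differentiation. Since each $g_i$ is a Matkowski weak contraction, $|g_\omega([0,1])| \to 0$ uniformly as $|\omega| \to \infty$, and since $\varphi^{-1}$ is uniformly continuous on $[0,1]$, also $|f_\omega([0,1])| = |\varphi^{-1}(g_\omega([0,1]))| \to 0$. Pick any Borel $K$ with $\mu_\varphi(K) > 0$ and $\dim_H K < \alpha$, set $F := \varphi(K)$ (so $\lambda(F) > 0$), and choose a Lebesgue density point $y^\ast$ of $F$ lying in $\varphi(U)$. For $n$ large, let $\omega$ be the level-$n$ word with $y^\ast \in g_\omega([0,1])$; then $g_\omega([0,1]) \subset \varphi(U)$, which is equivalent to $f_\omega([0,1]) \subset U$. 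Since $\min_i |g_i([0,1])| > 0$ keeps the eccentricity of $g_\omega([0,1])$ at $y^\ast$ bounded relative to a symmetric interval centred at $y^\ast$, the Lebesgue density theorem gives $\lambda(F \cap g_\omega([0,1])) > 0$. Setting $A := \varphi^{-1}(F \cap g_\omega([0,1])) \subset K \cap f_\omega([0,1]) \subset U$, one has $\mu_\varphi(A) = \lambda(F \cap g_\omega([0,1])) > 0$ and $\dim_H A \le \dim_H K < \alpha$, and Lemma \ref{Base-dim-Haus} applied to $\varphi : A \to \varphi(A)$ delivers $(\alpha, A, +\infty)$, hence $(\alpha, U, +\infty)$.

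The main obstacle is arranging that a density point $y^\ast$ of $F$ actually lies in $\varphi(U)$, i.e., that $\lambda(F \cap \varphi(U)) = \mu_\varphi(K \cap U) > 0$. Full support of $\mu_\varphi$ gives only $\mu_\varphi(U) > 0$; what is needed is the stronger assertion that some low-dimensional $K$ with positive $\mu_\varphi$-mass can be chosen inside $U$, equivalently $\dim_H(\mu_\varphi|_U) = \dim_H \mu_\varphi$. I expect to obtain this from the IFS self-similarity encoded in the conjugacy $\varphi \circ f_{\omega_0} = g_{\omega_0} \circ \varphi$: the $1$-Lipschitz (hence absolutely continuous) property of $g_{\omega_0}$, which follows from the Matkowski bound $\phi_i(t) < t$, should allow one to relate $\mu_\varphi|_{f_{\omega_0}([0,1])}$ to $\mu_\varphi$ on $[0,1]$ closely enough to equate their Hausdorff dimensions, thereby transporting a low-dimensional positive-mass set into a prescribed cell contained in $U$. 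Carrying this localization out rigorously, given that $f_{\omega_0}$ itself need not be Lipschitz under the hypotheses of the proposition, is the delicate technical point.
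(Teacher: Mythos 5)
Your reduction is sound as far as it goes: since $\varphi$ is a strictly increasing continuous bijection, $\mu_\varphi(E)=\lambda(\varphi(E))$, so any Borel $A\subset U$ with $\mu_\varphi(A)>0$ and $\dim_H A<\alpha$ has $\dim_H\varphi(A)=1$, and Lemma \ref{Base-dim-Haus} then gives $(\alpha,A,+\infty)$, hence $(\alpha,U,+\infty)$. The genuine gap is exactly the localization step you flag and do not close: you need $\mu_\varphi(K\cap U)>0$ for some $K$ with $\dim_H K<\alpha$, and neither of your two devices delivers it. The density-point argument is circular here --- to find a density point of $F=\varphi(K)$ inside $\varphi(U)$ you must already know $\lambda(F\cap\varphi(U))=\mu_\varphi(K\cap U)>0$, which is the thing to be proved. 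The IFS-transport plan also breaks down on both sides of the conjugacy: $f_{\omega_0}$ is only assumed strictly increasing and continuous, so $\dim_H f_{\omega_0}(K)$ need not be bounded by $\dim_H K$; and although $g_{\omega_0}$ is $1$-Lipschitz, a $1$-Lipschitz strictly increasing map can still send a set of positive Lebesgue measure to a null set (take $g(x)=\int_0^x h$ with $h$ bounded, positive on a dense open set but vanishing on a set of positive measure), so $\mu_\varphi(f_{\omega_0}(K))=\lambda(g_{\omega_0}(\varphi(K)))$ may well be zero. So ``relating $\mu_\varphi|_{f_{\omega_0}([0,1])}$ to $\mu_\varphi$'' is not a technicality but the missing content.

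The paper avoids all of this with one observation: it takes $K$ with $\dim_H K<\alpha$ and $\mu_\varphi(K)=1$ (using the identity $\dim_H\mu_\varphi=\inf\{\dim_H K:\mu_\varphi(K)=1\}$, which it asserts for this measure), whence $\mu_\varphi(K\cap U)=\mu_\varphi(U)>0$ because $\mathrm{supp}(\mu_\varphi)=[0,1]$. With that single input your argument closes immediately --- set $A=K\cap U$ and apply Lemma \ref{Base-dim-Haus}; no density points and no transport are needed. (The paper itself then runs a direct covering argument rather than citing Lemma \ref{Base-dim-Haus}: assuming $\varphi$ is $\alpha$-H\"older on $U$, it covers $K\cap U$ by intervals of length $<\delta$ with $\sum_i(b_i-a_i)^\beta\le 1$ for some $\beta\in(\dim_H K,\alpha)$ and derives $0<\mu_\varphi(K\cap U)\le C\delta^{\alpha-\beta}$, a contradiction --- but this is the same mechanism as the lemma.) As written, your proposal is incomplete, and the route you sketch for completing it would fail.
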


\begin{proof}
We remark that $\textup{supp}(\mu_{\varphi}) = [0,1]$ and 
\[ \dim_H \mu_{\varphi} = \inf\{\dim_H K : \mu_{\varphi}(K) = 1 \}. \]
Now the proof is easy to see. 
Assume that there exists an open interval $U$ such that $(\alpha, U, +\infty)$ fails. 
Then, there exists a constant $C$ such that for any $x_1, x_2 \in U$, 
\[  |\varphi(x_1)- \varphi(x_2)| \le C |x_1 - x_2|^{\alpha}. \]
Since $\alpha > \dim_H \mu_{\varphi}$, 
we can take $K \subset [0,1]$ such that $\dim_H K < \alpha$ and $\mu_{\varphi}(K) = 1$.   
Let $\beta \in (\dim_H K,  \alpha)$.  
Let $\delta > 0$ be an arbitrarily taken number. 
Then there exist a countably infinitely many number of pairs $a_i < b_i$ 
such that $\sup_i (b_i - a_i) < \delta$
and 
$K \cap U \subset \cup_i (a_i, b_i)$ and $\sum_i (b_i - a_i)^{\beta} \le 1$. 
Furthermore, by $\textup{supp}(\mu_{\varphi}) = [0,1]$, 
\[ 0 < \mu_{\varphi}(K \cap U) \le \sum_i \varphi(b_i) - \varphi(a_i) \le  C \sum_{i} (b_i - a_i)^{\alpha} \le C \delta^{\alpha - \beta}. \]
This cannot hold if $\delta$ is sufficiently small and contradicts the arbitrariness of $\delta$.    
\end{proof}

\begin{Exa}[The assumption of Proposition \ref{fix-deri} is not a necessary condition]  
Let $X = Y = [0,1]$. 
Let $I = \{0,1\}$. 
Let  $f_i (x) = (x+i)/2$.
Let $g_0 (y) = (5y)/(10-2y)$ and $g_1 (y) = (y+5)/(8 - 2y)$.  
Then, we can apply \cite[Theorem 1.2]{O14J} to this setting and have  $\dim_H \mu_{\varphi} < 1$ and hence, 
$\varphi$ is a singular function\footnote{A singular function is a continuous increasing function on the unit interval whose derivatives are zero at Lebesgue almost surely points.}, 
furthermore by Proposition \ref{dist-func}, $\varphi$ is not Lipschitz continuous. 
However, the assumption of Proposition \ref{fix-deri} fails. 
\end{Exa}

\begin{Rem}[Formula for $\mu_{\varphi}$] 
We assume that the solution $\varphi$ satisfies the Dini condition in \cite{FL99} and $g_i \in C^2([0,1])$ holds for each $i$.      
By the Lebesgue-Stieltjes integral, 
for every bounded Borel measurable function $F : [0,1] \to \mathbb{R}$, 
\[ \int_{[0,1]} F(x) d\mu_{\varphi}(x) = \sum_i \int_{[0,1]} g_i^{\prime}(\varphi(x)) F(f_i(x)) d\mu_{\varphi}(x).  \]

Let $h$ be a positive function on $[0,1]$ such that 
\[ h(x) = \sum_i g_i^{\prime}(g_i (\varphi(x)) ) h (f_i (x)).  \]
\[ h \circ \varphi^{-1} (x) = \sum_i g_i^{\prime}(g_i (x) ) h \circ \varphi^{-1} (g_i (x)).  \]
This is unique under the constraint that 
\[  \int_{[0, 1]}  h(x) \mu_{\varphi}(dx) = 1. \] 
See \cite[Theorem 1.1]{FL99}.  

Then, by \cite[Corollary 3.5]{FL99},  
\[ \dim_H \mu_{\varphi} = \frac{\sum_i \int_{[f_i (0), f_i (1)]} h(x) \log g_i^{\prime}(\varphi (x)) \mu_{\varphi}(dx)}{\sum_i \int_{[f_i (0), f_i (1)]}  h(x) \log f_i^{\prime}(f_i^{-1}(x)) \mu_{\varphi}(dx)} \]

Let $f_i (x) = (x+i)/N$, $i \in I$.  
Then, 
 \[ \dim_H \mu_{\varphi} = \frac{\sum_{i \in I}\int_{[0, 1]} H(g_i (y)) g^{\prime}_i (y) \log \left(1/g_i^{\prime}(g_i(y))\right) \ell(dy)}{\log N} \]  
 where $\ell$ is the Lebesgue measure on $[0,1]$,  
 \[ H(y) = \sum_{i \in I} g_i^{\prime}(g_i (y) ) H(g_i (y)),  \textup{ and } \int_{[0, 1]} H(y) \ell(dy) = 1. \] 
 It is interesting to investigate properties for $H$. 
\end{Rem}

\subsection{The case that $X$ is the two-dimensional Sierpi\'nski gasket}

Finally we deal with an example for the case that $X$ is the two-dimensional Sierpi\'nski gasket and $Y = [0,1]$.  

\begin{Exa}\label{exa-SG}
Assume that $X$ is the two-dimensional Sierpi\'nski gasket and $Y = [0,1]$.  
Here we follow the notation in Example \ref{SG-1}, however, we consider the case that each $g_i$ does not depend on $x$. 
Let $g_0 (y) = 1/(2-y) - 1/2$, $g_1(y) = (2/3)y + 1/6$ and $g_2(y) = y/(y+1) + 1/2$.   

Then, by Theorem \ref{exist-unique},   
the unique solution $\varphi$ of \eqref{conjugate} holds, 
and by adopting the method\footnote{Recall Remark \ref{Ruan-interval}} taken in \cite{R10}, 
we can show that $\varphi$ is continuous. 

Furthermore, by Theorem \ref{Thm-dim-Haus-2}, 
if \[ \beta > \frac{\frac{1}{3} \int_{[0,1]} \log \left(\frac{3}{2}(2-y+y^2)^2\right) d\nu_{(1/3, 1/3, 1/3)}(y)}{\log 2}, \] 
then, $(\beta, x, +\infty)$ holds for $\mu_{(1/3, 1/3, 1/3)}$-a.e. $x$.
\end{Exa}

\begin{Rem}
 \cite{CKO08, R10, RR11} consider fractal interpolation functions on Sierpi\'nski gasket and more generally post-critically finite self-similar sets.    
Our choices of $\{g_i\}_i$ are not considered by them, 
and it seems that their methods are not applicable to showing that $(\alpha, x, +\infty)$ holds for $\mu_{(1/3, 1/3, 1/3)}$-a.e. $x$. 
\end{Rem}

\begin{Rem}[Stability]
By arguing as in \cite[Proposition 2.7 (iii)]{O16}, 
we are able to extend Proposition \ref{fix-deri} to several cases of non-differentiability of $g_i$.  
The regularity assumptions for $f_i$ and $g_i$ are not necessarily essential. 
\end{Rem}


\section{Stability}

\begin{Def}
Let $X$ and $Y$ be two compact metric spaces. 
We let the {\it Gromov-Hausdorff distance} $d_{\textup{GH}}(X, Y)$ be the infimum of any values $d_{M}^{\textup{Haus}}(f(X), g(Y))$ 
for any compact metric space $M$ and any isometric embeddings $f : X \to M$ and $g : Y \to M$, where $d_{M}^{\textup{Haus}}$ is the Hausdorff metric on the space of all compact subsets of the space $M$. 
\end{Def}

Let $I$ be a finite set containing at least two points.   
Let $n \in \mathbb{N} \cup \{\infty\}$. 
Let $Z^{(n)}$ be a compact metric space. 
For $i \in I$ and $n \in \mathbb{N} \cup \{\infty\}$, let $h^{(n)}_j$ be a weak contraction on $Z^{(n)}$ in the sense of Browder \cite{B68}. 
Let $K^{(n)}$ be the attracter of the iterated function system $\left(Z^{(n)}, (h^{(n)}_i)_{i \in I} \right)$, that is, 
$K^{(n)}$ is the unique compact subset of $Z^{(n)}$ satisfying that 
$K^{(n)} = \bigcup_{i \in I} h^{(n)}_i \left( K^{(n)} \right)$.  

\begin{Thm}\label{GH-conv}
Let $\widetilde K^{(n)}_i, \widetilde K^{(n)}$ be subsets of $Z^{(n)}$ satisfying that 
\begin{equation}\label{approx} 
\widetilde K^{(n)} = \bigcup_{i \in I} h^{(n)}_i \left(\widetilde K^{(n)}_i \right),  
\end{equation}  
and 
\begin{equation}\label{approx-conv} 
\lim_{n \to \infty} \max_{i \in I} d^{\textup{Haus}}_{Z^{(n)}}\left(\widetilde K^{(n)}_i, \widetilde K^{(n)}\right) = 0.  
\end{equation}    
Assume that there exist compact metric spaces $\left((M^{(n)}, d_{M^{(n)}})\right)_{n \in \mathbb{N} \cup \{\infty\}}$ 
and isometries $\varphi_{n, k} : Z^{(k)} \to M^{(n)}$, $k \in \{n, \infty\}$ 
such that for each $i \in I$, \\
(a) 
\[ \lim_{n \to \infty} d^{\textup{Haus}}_{M^{(n)}} \left(\varphi_{n, n} (\widetilde K_i^{(n)}), \varphi_{n, n} (\widetilde K_i^{(n)}) \cap \varphi_{n, \infty} (Z^{(\infty)}) \right) = 0, \]
and, \\
(b) 
\[ \lim_{n \to \infty}\sup_{x \in L^{(n)}} d_{M^{(n)}}\left( \varphi_{n, n} \circ h^{(n)}_i  \circ \varphi_{n, n}^{-1}(x),  \varphi_{n, \infty} \circ h^{(\infty)}_i \circ \varphi_{n, \infty}^{-1}(x)  \right) = 0, \]
where we let 
\[ L_j^{(n)} := \varphi_{n, n} (\widetilde K_j^{(n)}) \cap \varphi_{n, \infty} (Z^{(\infty)}).\]  
Then, 
\[ \lim_{n \to \infty} d^{\textup{Haus}}_{M^{(n)}} \left( \varphi_{n, n} (\widetilde K^{(n)}),  \varphi_{n, \infty} (K^{(\infty)})  \right) = 0. \]
In particular 
\[ \lim_{n \to \infty} d^{\textup{GH}} \left( \widetilde K^{(n)},  K^{(\infty)}  \right) = 0. \]
\end{Thm}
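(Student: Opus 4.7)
The plan is to transport everything into the common ambient space $M^{(n)}$, use the three smallness hypotheses \eqref{approx-conv}, (a), (b) to realize a suitable subset of $\varphi_{n,\infty}^{-1}(\varphi_{n,n}(\widetilde K^{(n)}))$ as an ``almost fixed point'' of the Hutchinson operator of the limiting IFS on $Z^{(\infty)}$, and then invoke the uniqueness of the attractor $K^{(\infty)}$ by a compactness argument in the hyperspace of $Z^{(\infty)}$.

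First, set $A^{(n)} := \varphi_{n,n}(\widetilde K^{(n)})$, $A_i^{(n)} := \varphi_{n,n}(\widetilde K_i^{(n)})$, $B := \varphi_{n,\infty}(Z^{(\infty)})$, $B^{(\infty)} := \varphi_{n,\infty}(K^{(\infty)})$ inside $M^{(n)}$, and conjugate the contractions by the isometries, $\tilde h_i^{(n)} := \varphi_{n,n}\circ h_i^{(n)}\circ \varphi_{n,n}^{-1}$ and $\tilde h_i^{(\infty)} := \varphi_{n,\infty}\circ h_i^{(\infty)}\circ \varphi_{n,\infty}^{-1}$. Then \eqref{approx} reads $A^{(n)} = \bigcup_i \tilde h_i^{(n)}(A_i^{(n)})$, and the goal becomes $d^{\textup{Haus}}_{M^{(n)}}(A^{(n)}, B^{(\infty)}) \to 0$. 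Using (a), (b), and the fact that every weak contraction in the sense of Browder is nonexpansive, I would chain the Hausdorff approximations
\[ A^{(n)} \;\stackrel{\textup{(a)}}{\approx}\; \bigcup_i \tilde h_i^{(n)}(L_i^{(n)}) \;\stackrel{\textup{(b)}}{\approx}\; E^{(n)} \;:=\; \bigcup_i \tilde h_i^{(\infty)}(L_i^{(n)}) \;\subset\; B, \]
where the error at the first step is at most $\max_i d^{\textup{Haus}}_{M^{(n)}}(A_i^{(n)}, L_i^{(n)})$ and at the second step at most $\max_i \sup_{x \in L_i^{(n)}} d_{M^{(n)}}(\tilde h_i^{(n)}(x), \tilde h_i^{(\infty)}(x))$, both tending to $0$ by hypothesis.

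Next, transport $E^{(n)}$ back to $Z^{(\infty)}$ via the isometry $\varphi_{n,\infty}$, setting $D^{(n)} := \varphi_{n,\infty}^{-1}(E^{(n)}) = \bigcup_i h_i^{(\infty)}(\varphi_{n,\infty}^{-1}(L_i^{(n)}))$. Combining the chain above with (a) and \eqref{approx-conv} shows $d^{\textup{Haus}}_{Z^{(\infty)}}(\varphi_{n,\infty}^{-1}(L_i^{(n)}), D^{(n)}) \to 0$ uniformly in $i$, and the nonexpansivity of $h_i^{(\infty)}$ then gives
\[ d^{\textup{Haus}}_{Z^{(\infty)}}\bigl(D^{(n)},\; F^{(\infty)}(D^{(n)})\bigr) \;\to\; 0, \qquad F^{(\infty)}(E) := \bigcup_i h_i^{(\infty)}(E). \]
Since $(\mathcal{K}(Z^{(\infty)}), d^{\textup{Haus}})$ is compact (because $Z^{(\infty)}$ is) and each $h_i^{(\infty)}$ is a Browder weak contraction, $F^{(\infty)}$ is continuous with unique fixed point $K^{(\infty)}$; hence any subsequential limit $D^{(n_k)} \to D^\ast$ of $(D^{(n)})_n$ satisfies $F^{(\infty)}(D^\ast) = D^\ast$, forcing $D^\ast = K^{(\infty)}$, and thus $D^{(n)} \to K^{(\infty)}$. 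Pushing forward by $\varphi_{n,\infty}$ yields $E^{(n)} \to B^{(\infty)}$ in $M^{(n)}$, and combined with $A^{(n)} \approx E^{(n)}$ this gives $d^{\textup{Haus}}_{M^{(n)}}(A^{(n)}, B^{(\infty)}) \to 0$. The Gromov--Hausdorff convergence is then immediate from the isometric embeddings $\varphi_{n,n},\varphi_{n,\infty}$ into $M^{(n)}$.

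The hard part is the last step, extracting Hausdorff convergence of $D^{(n)}$ to $K^{(\infty)}$ from the approximate fixedness $d^{\textup{Haus}}(D^{(n)}, F^{(\infty)}(D^{(n)})) \to 0$. In the strict-contraction case one would get the immediate bound $d^{\textup{Haus}}(D^{(n)}, K^{(\infty)}) \le d^{\textup{Haus}}(D^{(n)}, F^{(\infty)}(D^{(n)}))/(1-r)$, but in the Browder setting no uniform rate is available, so the argument must proceed indirectly through compactness of the hyperspace and continuity of $F^{(\infty)}$. A further delicate point is the bookkeeping in the displayed chain, where all errors must be absorbed against the nonexpansivity of weak contractions---an issue that would become substantially more serious if one tried to iterate inside $M^{(n)}$ rather than first reducing to the approximate fixed-point problem on the compact hyperspace $\mathcal{K}(Z^{(\infty)})$.
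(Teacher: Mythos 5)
Your argument is correct, but it takes a genuinely different route from the paper. The paper argues by contradiction entirely inside $M^{(n)}$: assuming the Hausdorff distance stays above some $\epsilon_0>0$, it invokes Jachymski's characterization of Browder weak contractions to produce upper semicontinuous comparison functions $\phi_i$ with $\phi_i(t)<t$, shows that the gap function $F(t)=t-\max_i\phi_i(t)$ is bounded below by some $\epsilon_1>0$ on $[\epsilon_0,\infty)$, and then runs the same three-term triangle-inequality chain you use (errors from (a), from (b), and from the weak contraction applied to the distance) to force the distance to drop by $\epsilon_1/4$ at each stage, a contradiction. You instead pull everything back to $Z^{(\infty)}$, exhibit $D^{(n)}$ as an approximate fixed point of the Hutchinson operator $F^{(\infty)}$, and conclude by compactness of the hyperspace $\mathcal{K}(Z^{(\infty)})$, continuity (nonexpansivity) of $F^{(\infty)}$, and uniqueness of the attractor. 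This isolates the whole Browder-type difficulty (no uniform contraction rate) into the single soft step ``approximate fixed points converge to the unique fixed point,'' and avoids the semicontinuity bookkeeping with $\phi_i$ and $F$; the paper's route, by contrast, stays quantitative in spirit and never needs the hyperspace. Two small points you should patch but which do not affect validity: the $\widetilde K^{(n)}_i$ are only assumed to be subsets, so $D^{(n)}$ need not be compact and you should pass to closures (harmless, since the Hausdorff pseudometric and the nonexpansivity estimates are closure-insensitive) before invoking compactness of $\mathcal{K}(Z^{(\infty)})$; and you implicitly use nonemptiness of the sets involved, which is implicit in the paper as well.
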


If $K^{(n)}$ is the attracter of $\{Z^{(n)}, (h^{(n)}_i)_{i \in I}\}$, 
then, $\widetilde K^{(n)}_i = \widetilde K^{(n)} = K^{(n)}$ satisfy the assumption. 

\begin{proof}
We show this assertion by contradiction. 
Assume the conclusion fails. 
Then, by relabeling if needed, there exists $\epsilon_0 > 0$ such that for any $n$, 
\[ d^{\textup{Haus}}_{M^{(n)}} \left( \varphi_{n, n} (\widetilde K^{(n)}),  \varphi_{n, \infty} (K^{(\infty)})  \right) \ge \epsilon_0. \]

By \cite[Theorem 1 (e)]{J97}, 
for each $i \in I$, 
we can take an upper semicontinuous function $\phi_i$ such that $\phi_i (t) < t$ for any $t > 0$ and $h^{(\infty)}_i$ is $\phi_i$-contractive, 
and let  
\[ F(t) := t - \max_{i \in I} \phi_i (t), \ \ t > 0. \] 
Since $Z^{(\infty)}$ is compact, 
$\lim_{t \to \infty} F(t) = +\infty$. 
Since $F(t) > 0$ for each $t > 0$ and $F(t)$ is lower semicontinuous, and,  
\begin{equation}\label{inf-2} 
\inf_{t \ge \epsilon_0} F(t) \ge \epsilon_1 > 0. 
\end{equation} 
 
It follows that for each $j \in I$, 
\[ d^{\textup{Haus}}_{M^{(n)}} \left( \varphi_{n, n} \left( h^{(n)}_j (\widetilde K^{(n)}_j)\right),  \varphi_{n, \infty} (h^{(\infty)}_j (K^{(\infty)}) )  \right) \] 
\[\le d^{\textup{Haus}}_{M^{(n)}} \left( \varphi_{n, n} (h^{(n)}_j (\widetilde K^{(n)}_j)), \varphi_{n, n} \circ h^{(n)}_j  \circ \varphi_{n, n}^{-1} \left(L_j^{(n)}\right) \right)  \]
\[ + d^{\textup{Haus}}_{M^{(n)}} \left(\varphi_{n, n} \circ h^{(n)}_j  \circ \varphi_{n, n}^{-1} \left(L_j^{(n)}\right), \varphi_{n, \infty} \circ h^{(\infty)}_j  \circ \varphi_{n, \infty}^{-1} \left(L_j^{(n)}\right) \right) \]
\[ + d^{\textup{Haus}}_{M^{(n)}} \left(\varphi_{n, \infty} \circ h^{(\infty)}_j  \circ \varphi_{n, \infty}^{-1} \left(L_j^{(n)}\right) , \varphi_{n, \infty} \circ h^{(\infty)}_j  \circ \varphi_{n, \infty}^{-1} \left(\varphi_{n, \infty} (K^{(\infty)})\right)   \right). \]

(1) By using the facts that $K^{(n)}$ is an attracter  and  each $h^{(n)}_j$ is weakly contractive and the assumption (a),  
\[  d^{\textup{Haus}}_{M^{(n)}} \left( \varphi_{n, n} (K^{(n)}), \varphi_{n, n} \circ h^{(n)}_j  \circ \varphi_{n, n}^{-1} \left(L_j^{(n)}\right) \right) \le d^{\textup{Haus}}_{M^{(n)}} \left(\varphi_{n, n} (\widetilde K_{j}^{(n)}), L_j^{(n)}\right) \to 0. \]

(2) By the assumption (b),
\[ d^{\textup{Haus}}_{M^{(n)}} \left( \varphi_{n, n} \circ h^{(n)}_j  \circ \varphi_{n, n}^{-1} \left(L_{j}^{(n)}\right), \varphi_{n, \infty} \circ h^{(\infty)}_j  \circ \varphi_{n, \infty}^{-1} \left(L_{j}^{(n)}\right) \right)  \to 0. \]

(3) By using that facts that each $h^{(\infty)}_j$ is a weak contraction and \eqref{inf-2},  
\[ d^{\textup{Haus}}_{M^{(n)}} \left( \varphi_{n, \infty} \circ h^{(\infty)}_j  \circ \varphi_{n, \infty}^{-1} \left(L_{j}^{(n)}\right), \varphi_{n, \infty} \circ h^{(\infty)}_j  \circ \varphi_{n, \infty}^{-1} \left(\varphi_{n, \infty} (K^{(\infty)})\right) \right) \]
\[ \le  \phi_j \left(d^{\textup{Haus}}_{M^{(n)}} \left( L_{j}^{(n)},  \varphi_{n, \infty} (K^{(\infty)})  \right) \right). \]
\[ \le \phi_j \left(d^{\textup{Haus}}_{M^{(n)}} \left(\varphi_{n, n} (\widetilde K_{j}^{(n)}),  \varphi_{n, \infty} (K^{(\infty)})  \right) + d^{\textup{Haus}}_{M^{(n)}} \left( L_{j}^{(n)}, \varphi_{n, n} (\widetilde K_j^{(n)}) \right) \right). \]

Since for each $j$, 
\[ \lim_{n \to \infty} d^{\textup{Haus}}_{M^{(n)}} \left(\varphi_{n, n} (\widetilde K^{(n)}),  \varphi_{n, n} (\widetilde K_j^{(n)}) \right)= 0,\] 
by recalling that \[ d^{\textup{Haus}}_{M^{(n)}} \left(\varphi_{n, n} (\widetilde K^{(n)}),  \varphi_{n, \infty} (K^{(\infty)})  \right) > \epsilon_0, \]
it follows that for large $n$, 
\[ d^{\textup{Haus}}_{M^{(n)}} \left(\varphi_{n, n} (\widetilde K_{j}^{(n)}),  \varphi_{n, \infty} (K^{(\infty)})  \right) > \epsilon_0. \]

Hence, for large $n$, 
\[ \phi_j \left(d^{\textup{Haus}}_{M^{(n)}} \left(\varphi_{n, n} (\widetilde K_{j}^{(n)}),  \varphi_{n, \infty} (K^{(\infty)})  \right) + d^{\textup{Haus}}_{M^{(n)}} \left( L_{j}^{(n)}, \varphi_{n, n} (\widetilde K_j^{(n)}) \right) \right) \]
\[ \le d^{\textup{Haus}}_{M^{(n)}} \left(\varphi_{n, n} (\widetilde K_j^{(n)}),  \varphi_{n, \infty} (K^{(\infty)})  \right) + d^{\textup{Haus}}_{M^{(n)}} \left( L_{j}^{(n)}, \varphi_{n, n} (\widetilde K_{j}^{(n)}) \right)  - \epsilon_1 \]
\[ \le d^{\textup{Haus}}_{M^{(n)}} \left(\varphi_{n, n} (\widetilde K^{(n)}),  \varphi_{n, \infty} (K^{(\infty)})  \right) +  d^{\textup{Haus}}_{M^{(n)}} \left(\varphi_{n, n} (\widetilde K_j^{(n)}),  \varphi_{n, n} (\widetilde K^{(n)})  \right) \] 
\[ + d^{\textup{Haus}}_{M^{(n)}} \left( L_{j}^{(n)}, \varphi_{n, n} (\widetilde K_{j}^{(n)}) \right)  - \epsilon_1 \]
\[ \le  d^{\textup{Haus}}_{M^{(n)}} \left(\varphi_{n, n} (\widetilde K^{(n)}),  \varphi_{n, \infty} (K^{(\infty)})  \right) - \epsilon_1 / 2. \]

Hence if $n$ is sufficiently large,
\[ d^{\textup{Haus}}_{M^{(n)}} \left(\varphi_{n, n} (\widetilde K^{(n)}),  \varphi_{n, \infty} (K^{(\infty)})  \right) \] 
\[\le \max_j d^{\textup{Haus}}_{M^{(n)}} \left( \varphi_{n, n} \left( h^{(n)}_j (\widetilde K^{(n)}_j)\right),  \varphi_{n, \infty} (h^{(\infty)}_j (K^{(\infty)}) )  \right) \]
\[ \le  d^{\textup{Haus}}_{M^{(n)}} \left(\varphi_{n, n} (\widetilde K^{(n)}),  \varphi_{n, \infty} (K^{(\infty)})  \right) - \epsilon_1 / 4.  \]
This is a contradiction. 
\end{proof} 

\subsection{Application}

For $n \in \mathbb{N} \cup \{\infty\}$, 
let $(X^{(n)}, d_{X^{(n)}})$ and $(Y^{(n)}, d_{Y^{(n)}})$ be two compact metric spaces.  
Let $f^{(n)}_{j} : X^{(n)} \to X^{(n)}$ and $g^{(n)}_{j} : Y^{(n)} \to Y^{(n)}$ be weak contractions in the sense of Browder.  
Consider a conjugate equation \eqref{conjugate} on $\left(X^{(n)}, Y^{(n)}, (f^{(n)}_{i})_{i \in I}, (g^{(n)}_{i})_{i \in I}\right)$ satisfying Assumptions \ref{ass-f} - \ref{ass-new-compat}.

Let 
\[ h^{(n)}_i (x, y) := \left( f^{(n)}_i (x), g^{(n)}_{i}(y) \right). \]

Let $Z^{(n)} := X^{(n)} \times Y^{(n)}$ and 
\[ d_{Z^{(n)}}\left((x^{(n)}_1, y^{(n)}_1), (x^{(n)}_2, y^{(n)}_2)\right) := \sqrt{ d_{X^{(n)}}\left(x^{(n)}_1, x^{(n)}_2\right)^2 +  d_{Y^{(n)}}\left(y^{(n)}_1, y^{(n)}_2\right)^2 }. \]
This gives a metric on $Z$ and by this metric $Z$ is a compact metric space.

The following is easy to see. 
\begin{Prop}
Let $f_n, n \ge 1$, and $f$ be a uniformly continuous family of real functions on a common compact metric space. 
Then, $f_n \to f, n \to \infty$ uniformly if and only if the graphs of $f_n$ converges to the graph of $f$ with respect to the Hausdorff distance.    
\end{Prop}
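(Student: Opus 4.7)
The plan is to prove the two implications separately, treating the proposition as a straightforward consequence of the definition of the Hausdorff distance on the product space $K \times \mathbb{R}$, where $K$ is the common compact domain. The product metric (whatever reasonable choice, e.g.\ $\sqrt{d_K^2 + |\cdot|^2}$ or $d_K + |\cdot|$) controls each coordinate up to a constant, so closeness in the product implies closeness of abscissae and of ordinates separately. Throughout, write $G_n = \{(x, f_n(x)) : x \in K\}$ and $G = \{(x, f(x)) : x \in K\}$.

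For the forward direction, suppose $f_n \to f$ uniformly. Given $\varepsilon > 0$, take $n$ large enough that $\sup_{x \in K} |f_n(x) - f(x)| \le \varepsilon$. For any $(x, f_n(x)) \in G_n$, the point $(x, f(x)) \in G$ lies at distance at most $\varepsilon$ in the product metric; the symmetric bound holds for points of $G$. Therefore the Hausdorff distance between $G_n$ and $G$ is at most $\varepsilon$, which directly gives convergence of the graphs.

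The reverse direction is where the uniform continuity hypothesis is used in an essential way. I interpret the phrase "uniformly continuous family" as equicontinuity of $\{f_n\}_{n \ge 1} \cup \{f\}$ (automatic uniformity from compactness of $K$ then yields a common modulus). Fix $\varepsilon > 0$ and choose $\delta > 0$ such that $d_K(x,y) < \delta$ forces $|f_n(x) - f_n(y)| < \varepsilon$ for every $n$ and $|f(x) - f(y)| < \varepsilon$. For $n$ large enough, the Hausdorff distance between $G_n$ and $G$ is less than $\min\{\delta, \varepsilon\}$. For any $x \in K$, pick $(y, f(y)) \in G$ within that distance of $(x, f_n(x))$. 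Then $d_K(x,y) < \delta$ and $|f_n(x) - f(y)| < \varepsilon$, so by equicontinuity
\[ |f_n(x) - f(x)| \le |f_n(x) - f(y)| + |f(y) - f(x)| < 2\varepsilon. \]
Taking the supremum over $x \in K$ yields $\sup_{x} |f_n(x) - f(x)| \le 2\varepsilon$, and uniform convergence follows.

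The main obstacle is not technical but interpretive: pinning down what "uniformly continuous family" means. Without a form of equicontinuity the reverse implication fails — narrow spikes on the graph can keep the Hausdorff distance small while the sup-norm distance stays bounded away from zero — so the statement should be read as asserting a common modulus of continuity for $\{f_n\} \cup \{f\}$. Once this is fixed, both implications reduce to one triangle inequality estimate, and no further machinery (compactness of the domain is used only to pass from pointwise to uniform modulus, which is automatic here) is required.
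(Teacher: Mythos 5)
Your proof is correct, and it is the standard argument the paper has in mind (the paper states this proposition without proof, remarking only that it ``is easy to see''). One caveat about your closing interpretive remark: the reverse implication does not actually require equicontinuity of the whole family, only uniform continuity of the limit $f$ --- your own argument uses only the modulus of $f$ at the step $|f(y)-f(x)|<\varepsilon$ --- and the ``narrow spike'' counterexample you invoke does not work, since the apex of a spike on the graph of $f_n$ is itself far from the graph of $f$, so the Hausdorff distance detects it. This does not affect the validity of the proof under the stated hypothesis, but the claim that the implication fails without equicontinuity should be dropped or corrected.
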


\begin{Rem}
If the value of $g^{(n)}_{i}(x, y)$ depends on $x$, then, 
$h^{(n)}_i$ may not be a weak contraction on $X \times Y$ and the proof of Theorem \ref{GH-conv} does not applicable to that case.  
\end{Rem}

\subsection{Examples}  

Now we give three cases. We consider the case that $Z^{(\infty)} = M^{(n)}$ only.  

\begin{Exa}[Case 1,   $Z^{(n)} = Z^{(\infty)} = M^{(n)}$ and both of $\varphi_{n,n}$ and $\varphi_{n,\infty}$ are the identity map.] 
This is the case that the spaces are common and functions driving \eqref{conjugate} vary. 
\cite[Proposition 2.7]{O16} states a result of this kind, and Theorem \ref{GH-conv} will imply \cite[Proposition 2.7]{O16}. 
\end{Exa} 

\begin{Exa}[Case 2,  $Z^{(n)} \subset Z^{(\infty)} = M^{(n)}$ and both of $\varphi_{n,n}$ and $\varphi_{n,\infty}$ are the inclusion maps.]   
This gives a {\it discrete approximation} of solution. 
Since each $h^{(n)}_{i}$ is weakly contractive, it follows that 
\[ K^{(\infty)} = \overline{\bigcup_{k \ge 1} \bigcup_{i_1, \dots, i_k \in I} h^{(\infty)}_{i_1} \circ \cdots \circ h^{(\infty)}_{i_{k-1}} \left( \Fix( h^{(\infty)}_{i_k} ) \right)}.  \]
See \cite{Ha85}.  
Therefore if we let 
\[ K^{(n)} :=  \bigcup_{i_1, \dots, i_n \in I} h^{(\infty)}_{i_1} \circ \cdots \circ h^{(\infty)}_{i_{n-1}} \left( \Fix( h^{(\infty)}_{i_n} ) \right), \]
and, 
\[ K^{(n)}_{i} := K^{(n-1)}, i \in I, \] 
then, \eqref{approx} and \eqref{approx-conv} hold. 
\end{Exa}

\begin{Exa}[Case 3,  $Z^{(n)} \ne Z^{(\infty)} = M^{(n)}$ and $\varphi_{n,\infty}$ are identity maps.] 
We give an example for small deformations of de Rham type functions.   
Let $I := \{0,1\}$. 
For $n \in \mathbb{N}$,    
$X^{(n)} := [-1/n, (n+1)/n]$, $Y^{(n)} := [1/n, (n-1)/n]$.  
$X^{(\infty)} = Y^{(\infty)} := [0, 1]$.   
Let $e_{X,n} : X^{(n)} \to [0,1]$ and $e_{Y,n} : Y^{(n)} \to [0,1]$ be affine maps such that 
\[ e_{X,n}(-1/n) = 0, e_{X,n}((n+1)/n) = 1, e_{Y,n}(1/n) = 0, e_{Y,n}((n-1)/n) = 1. \]

Let $f_0(x) = x/2$, $f_1(x) = (x+1)/2$, $g_0 (y) = y/3$ and $g_1 (y) = (2y + 1)/3$.  
Then by Proposition \ref{interval-exist-con},  
there exists a unique continuous increasing solution $\varphi^{(\infty)}$ of \eqref{conjugate} for $\left(X^{(\infty)}, \{f^{(\infty)}_i\}_{i \in I}, Y^{(\infty)}, \{g^{(\infty)}_i\}_{i \in I}\right)$. 
Let $f^{(n)}_i := e_{X,n}^{-1} \circ f_i \circ e_{X,n}$, and $g^{(n)}_i := e_{Y,n}^{-1} \circ g_i \circ e_{Y,n}$, $i \in I$.   
Then there exists a unique continuous increasing solution $\varphi^{(n)}$ of \eqref{conjugate} for $\left(X^{(n)}, \{f^{(n)}_i\}_{i \in I}, Y^{(n)}, \{g^{(n)}_i\}_{i \in I}\right)$.

Let $M^{(n)} := [0,1]^2$.  
Let $\varphi_{n, n}(x, y) := (e_{X,n}(x), e_{Y,n}(y))$ and $\varphi_{n, \infty}(x, y) := (x, y)$.  
Now by applying Theorem \ref{GH-conv},     
\[ d^{\textup{GH}}\left(\textup{Graph}(\varphi^{(n)}), \textup{Graph}(\varphi^{(\infty)})\right) \to 0, \ \ n \to \infty. \]
\end{Exa} 

\section{Open problems}

(1) As in Example \ref{exa-overlap}, 
let $I = \{0,1\}$ and $X = Y = [0,1]$. 
Assume that $f_0 (x) = ax$, $f_1 (x) = ax + 1-a$, $g_0 (x) = bx$ and $g_1 (x) = bx + 1-b$ for some $a, b \in [1/2, 1)$.    
Then, find a pair $(a,b)$ such that the solution of \eqref{conjugate} exists. 
The case that $a$ is not an algebraic number seems interesting.  

(2) Several notions of bounded variation functions on metric measure spaces are proposed by Miranda \cite{Mir03}, Ambrosio-Di Marino \cite{AD14} etc.  
It is interesting to consider whether the solution of \eqref{conjugate} is of bounded variation in the senses of their papers. 
Furthermore, in the case that the solution of \eqref{conjugate} is {\it not} of bounded variation, it is also interesting to ask whether there exist a metric and a measure on $X$ such that the solution of \eqref{conjugate} is of bounded variation. 

(3) Muramoto and Sekiguchi \cite{MS} considered a generalization of de Rham type equations on $[0,1]$ which is different from ours. 
They introduced a new class of Takagi function by using it. 

(4) By following Hata-Yamaguti \cite{HY84}, derivative of the solution of de Rham function with respect to a parameter in $\{g_i\}_i$ yields a fractal function. 
From this viewpoint, it seems to be able to define an analogue of Takagi function on Sierpi\'nski gasket. 
Lebesgue singular function and the Takagi function on 2-dimensional plane is considered by Sumi \cite{Su07} in terms of random dynamical system on the complex plane. \\

{\it Acknowledgements.} The author was supported by JSPS KAKENHI Grant-in-Aid for JSPS Fellows (16J04213) and for Research activity Start-up (18H05830).
This work was also supported by the Research Institute for Mathematical Sciences, a Joint Usage/Research Center located in Kyoto University.\\

\end{document}